\def\sizefig{0.40}
\newcommand{\R}{\mathbb{R}}
\newcommand{\Pcal}{\mathcal{P}}
\newcommand{\N}{\mathbb{N}}
\newcommand{\x}{\mathbf{x}}
\newcommand{\y}{\mathbf{y}}
\newcommand{\z}{\mathbf{z}}
\newcommand{\f}{\mathbf{f}} 
\newcommand{\Plam}{\P_{\lambda}}
\def\P{\mathbf{P}}
\def\Q{\mathbf{Q}}
\def\q{\mathbf{q}}
\newcommand{\M}{\mathbf{M}}
\def\m{\mathbf{m}}
\def\H{\mathbf{H}}
\def\h{\mathbf{h}}
\def\a{\mathbf{a}}
\def\S{\mathbf{S}}
\def\K{\mathbf{K}}
\newcommand{\A}{\mathbf{A}}
\newcommand{\Mcal}{\mathcal{M}}
\newcommand{\xlamstar}{{\x}^*({\lambda})}
\theoremstyle{plain}
\newtheorem{theorem}{Theorem}[section]
\newtheorem{lemma}[theorem]{Lemma}
\theoremstyle{definition}
\newtheorem{definition}[theorem]{Definition}
\newtheorem{hypothesis}[theorem]{Assumption}
\newtheorem{example}{Example}
\theoremstyle{remark}
\newtheorem*{remark}{Remark}
\title{\bf Approximating Pareto Curves using Semidefinite Relaxations}
\begin{document}

\author{Victor Magron$^{1,2}$  \and Didier Henrion$^{1,2,3}$ \and Jean-Bernard Lasserre$^{1,2}$}

\footnotetext[1]{CNRS; LAAS; 7 avenue du colonel Roche, F-31400 Toulouse; France.}
\footnotetext[2]{Universit\'e de Toulouse;  LAAS, F-31400 Toulouse, France.}
\footnotetext[3]{Faculty of Electrical Engineering, Czech Technical University in Prague,
Technick\'a 2, CZ-16626 Prague, Czech Republic}

\maketitle
\begin{abstract}
We consider the problem of constructing an approximation of the Pareto curve associated with the multiobjective optimization problem $\min_{\mathbf{x} \in \mathbf{S}}\{ (f_1(\mathbf{x}), f_2(\mathbf{x}))  \}$, where $f_1$ and $f_2$ are two conflicting polynomial criteria and $\mathbf{S} \subset \mathbb{R}^n$ is a compact basic semialgebraic set. 
We provide a systematic numerical scheme to approximate the Pareto curve. We start by reducing the initial problem into a scalarized polynomial optimization problem (POP). Three scalarization methods lead to consider different parametric POPs, namely (a) a weighted convex sum approximation, (b) a weighted Chebyshev approximation, and (c) a parametric sublevel set approximation.
For each case, we have to solve a semidefinite programming (SDP) hierarchy parametrized by the number of moments or equivalently
the degree of a polynomial sums of squares approximation of the Pareto curve.  When the degree
of the polynomial approximation tends to infinity, we provide guarantees of convergence to the Pareto curve in $L^2$-norm
for methods (a) and (b), and $L^1$-norm for method (c).
\end{abstract}
\paragraph{Keywords}
Parametric Polynomial Optimization Problems,  Semidefinite Programming, Multicriteria Optimization, Sums of Squares Relaxations, Pareto Curve, Inverse Problem from Generalized Moments
\section{Introduction}
\label{sec:intro}
Let $\P$ be the bicriteria polynomial optimization problem $\min_{\x \in \S}\{ (f_1(\x), f_2(\x))  \}$, where $\S \subset \R^n$ is the basic semialgebraic set:
\begin{align}
\label{eq:S}
\S := \{\x \in \R^n : g_1(\x)  \geq 0, \dots, g_m(\x)  \geq 0 \} \:,
\end{align}

for some polynomials $f_1, f_2, g_1, \dots, g_m \in \R[\x]$. Here, we assume the following:
\begin{hypothesis}
\label{hyp:order}
The image space $\R^2$ is partially ordered with  the positive orthant $\R_{+}^2$. That is,
given $\x \in \R^2$ and $\y \in \R^2$, it holds $\x \geq \y$ whenever $\x-\y \in \R_+^2$. 
\end{hypothesis}
For the multiobjective optimization problem~$\P$, one is usually interested in computing, or at least approximating, the
following optimality set, defined e.g. in \cite[Definition 11.5]{jahn:2010:vector}.
\begin{definition}
Let Assumption~\ref{hyp:order} be satisfied. A point $\bar{\x} \in \S$ is called an {\it Edgeworth-Pareto (EP) optimal point} of Problem~$\P$, when there is no $\x \in \S$ such that $f_j(\x) \leq f_j (\bar{\x}), \: j = 1,2$ and $f(\x) \neq f(\bar{\x})$. A point $\bar{\x} \in \S$ is called a  {\it weakly Edgeworth-Pareto optimal point} of Problem~$\P$, when there is no $\x \in \S$ such that $f_j(\x) < f_j (\bar{\x}), \: j = 1,2$.
\end{definition}
In this paper, for conciseness, we will also use the following terminology:
\begin{definition}
The image set of weakly Edgeworth-Pareto optimal points is called the {\it Pareto curve}.
\end{definition}
Given a positive integer $p$ and $\lambda\in [0,1]$ both fixed, a common workaround consists in solving the {\it scalarized} problem:
\begin{align}
\label{eq:flamx-0}
f^p(\lambda) := \min_{\x \in \S} \{ [(\lambda f_1(\x))^p + ((1 - \lambda) f_2(\x))^p]^{1/p} \}\:,
\end{align}
which includes the weighted sum approximation ($p = 1$)
\begin{align}
\label{eq:flamx}
\Plam^{1} : \: f^1(\lambda) := \min_{\x \in \S} (\lambda f_1(\x) + (1 - \lambda)f_2(\x)) \: ,
\end{align}
and the weighted Chebyshev approximation ($p = \infty$)
\begin{align}
\label{eq:flamx-infty}
\Plam^{\infty} : \: f^{\infty}(\lambda) := \min_{\x \in \S} \max\{\lambda f_1(\x), (1 - \lambda) f_2(\x)\} \: .
\end{align}
Here, we assume that for almost all (a.a.) $\lambda \in [0, 1]$, the solution $\xlamstar$ of the scalarized problem~\eqref{eq:flamx} (resp.~\eqref{eq:flamx-infty}) is unique. 
Non-uniqueness may be tolerated on a Borel set $B \subset [0, 1]$, in which case one assumes image uniqueness of the solution. 
Then, by computing a solution $\xlamstar$, one can approximate the set 
$\{(f_1^*(\lambda), f_2^*(\lambda)) : \lambda \in [0, 1]\}$, where
\[
f_j^*(\lambda) := f_j(\xlamstar), \: j=1,2.
\]
%
Other approaches include using a numerical scheme such as the modified Polak method \cite{Polak:1976:Multicriteria}: first, one considers a finite discretization $(y_1^{(k)})$ of the interval $[a_1, b_1]$, where 
\begin{equation}
\label{eq:interval1}
a_1 := \min_{\x  \in \S} f_1(\x), \quad b_1 := f_1(\overline{\x}) \:, 
\end{equation}
with $\overline{\x}$ being a solution of $\min_{\x \in \S} f_2(\x)$. Then, for each $k$, one computes an optimal solution $\x_k$ of the constrained optimization problem $y^{(k)}_2:=\min_{\x \in \S} \{f_2 (\x) : f_1(\x) = y_1^{(k)} \}$ and select the Pareto front from
the finite collection $\{(y_1^{(k)},y^{(k)}_2)\}$.
This method can be improved with the iterative Eichfelder-Polak algorithm, see e.g.~\cite{Eichfelder:2009:Scalarization}. Assuming the smoothness of the Pareto curve, one can use the Lagrange multiplier of the equality constraint to select the next point  $y_1^{(k + 1)}$. It allows to combine the adaptive control of discretization points with the modified Polak method. In~\cite{DasDennis:1998:NBI}, Das and Dennis introduce the Normal-boundary intersection method which can find a uniform spread of points on the Pareto curve with more than two conflicting criteria and without assuming that the Pareto curve
is either connected or smooth. However, there is no guarantee that the NBI method succeeds in general and even in case it works well, the spread of points is only uniform under certain additional assumptions.
Interactive methods such as STEM~\cite{BenayounMontgolfier:1971:MultiLinear} rely on a {\it decision maker} to select at each iteration the weight $\lambda$ (most often in the case $p = \infty$) and to make a trade-off between criteria after solving the resulting scalar optimization problem.

So discretization methods suffer from two major drawbacks. (i) They only provide a {\it finite subset} of the Pareto curve
and (ii) for each discretization point one has to compute a {\it global} minimizer of the resulting optimization problem
(e.g. (\ref{eq:flamx}) or (\ref{eq:flamx-infty})).
Notice that when $f$ and $\S$ are both convex then point (ii) is not an issue. 

In a recent work \cite{Gorissen2012319}, Gorissen and den Hertog  avoid discretization schemes for 
convex problems with multiple linear criteria $f_1, f_2, \dots, f_k$ and a convex polytope $\S$. They provide an inner approximation of $f(\S) + \R_+^k$ by combining robust optimization techniques with semidefinite programming; for more details the reader is referred to~\cite{Gorissen2012319}.

\paragraph*{Contribution.} We provide a numerical scheme with two characteristic features:
It  avoids a discretization scheme and approximates the Pareto curve in a relatively strong sense. 
More precisely, the idea is 
consider multiobjective optimization as 
a particular instance of {\it parametric polynomial optimization}
for which some strong approximation results are available when the data are polynomials and semi-algebraic sets. In fact we will investigate this approach: 
\begin{description}
\item[method (a)] for the first formulation (\ref{eq:flamx}) when $p=1$, this is a {\it weighted convex sum approximation};
\item[method (b)] for the second formuation (\ref{eq:flamx-infty}) when $p=\infty$, this is a {\it weighted Chebyshev approximation};
\item[method (c)] for a third formulation inspired by~\cite{Gorissen2012319}, this is a {\it parametric sublevel set approximation}.
\end{description}

When using some weighted combination of criteria ($p=1$, method (a) or $p=\infty$, method (b))
we treat each function $\lambda\mapsto f_j(\lambda)$, $j=1, 2$, as the signed density of the signed Borel measure $d \mu_j := f_j(\lambda) d \lambda$ with respect to the Lebesgue measure $d \lambda$ on $[0, 1]$. Then the procedure consists of two distinct steps:
\begin{enumerate}
\item
In a first step, we solve a hierarchy of semidefinite programs (called SDP hierarchy) which permits to approximate any finite number $s + 1$ of moments $\m_j := (m_j^k), k = 0, \dots, s$ where :
\[ m_j^k := \int_0^1 \lambda^k f_j^*(\lambda) d \lambda \:, \quad k = 0, \dots, s \:, j=1,2 \:.  \]
More precisely, for any fixed integer $s$, step $d$ of the SDP hierarchy provides an approximation $\m^d_j$ of $\m_j$ which converges to
$\m_j$ as $d\to\infty$.
\item
The second step consists of two {\it density estimation} problems: namely, for each $j=1,2$, and given the moments $\m_j$ of the measure $f_j^* d \lambda$ with unknown density $f_j^*$ on $[0, 1]$, one computes a univariate polynomial $h_{s, j} \in \R_{s}[\lambda]$ which solves the optimization problem $\min_{h \in \R_{s}[\lambda]} \int_0^1 (f_j^*(\lambda) - h)^2 d \lambda$
if the moments $\m_j$ are known exactly.
The corresponding vector of coefficients $\h_j^s \in \R^{s + 1}$ is given by $\h_j^s = \H_s(\lambda)^{-1} \m_j$, $j=1,2$, where $\H_s(\lambda)$ is the $s$-moment matrix of the Lebesgue measure $d \lambda$ on $[0, 1]$; therefore in the expression 
for $\h_j^s$ we replace
$\m_j$ with its approximation. 
\end{enumerate}
 Hence for both methods (a) and (b), we have {\it $L^2$-norm convergence guarantees}. 
 
Alternatively, in our method (c), one can estimate the Pareto curve by solving for each $\lambda \in [a_1, b_1]$ the following parametric POP:
\begin{equation}\label{eq:flamx-u}
\Plam^u: \quad f^u(\lambda) := \min_{\x \in \S} \,\{\, f_2 (\x) : f_1 (\x) \leq \lambda \,\} \enspace,
\end{equation}
with $a_1$ and $b_1$ as in~\eqref{eq:interval1}. Notice that by definition $f^u(\lambda)=f^*_2(\lambda)$.
Then, we derive an SDP hierarchy parametrized by $d$, so that the optimal solution $q_{2d} \in \R[\lambda]_{2 d}$ of the $d$-th relaxation underestimates $f_2^*$ over $[a_1, b_1]$. In addition, $q_{2 d}$ converges to $f_2^*$ with respect to the $L_1$-norm, as $d \to \infty$. In this way, one can approximate from below the set of Pareto points, as closely as desired. 
 Hence for method (c), we have {\it $L^1$-norm convergence guarantees}. 

It is important to observe that even though $\Plam^{1}$, $\Plam^{\infty}$ and $\Plam^u$ are all global optimization problems
we do {\it not} need to solve them exactly. In all cases the information provided at step $d$ of the SDP hierarchy
(i.e. $\m^d_j$ for $\Plam^{1}$ and $\Plam^{\infty}$ and the polynomial
$q_{2 d}$ for $\Plam^u$) permits to define an approximation  of the Pareto front. In other words even in the absence of convexity
the SDP hierarchy allows to approximate the Pareto front and of course the higher in the hierarchy the better is the approximation.

The paper is organized as follows. Section~\ref{sec:prelim} is dedicated to recalling
some background about moment and localizing matrices. Section~\ref{sec:approx}
describes our framework to approximate the set of Pareto points using SDP relaxations of parametric optimization programs. These programs are presented in Section~\ref{subsec:parampop} while we describe how to reconstruct the Pareto curve in Section~\ref{subsec:sdp}. Section~\ref{sec:bench} presents some numerical experiments which illustrate the different approximation schemes.
\if{
The weighted sum approach is treated in Section~\~\eqref{subsubsec:cvx}, the Chebyshev norm approximation in Section~\ref{subsubsec:cheb}, while Section~\ref{subsubsec:L1} describes an a.
These programs include the weighted sum approach (Section), the Chebyshev norm () and 
}\fi
\section{Preliminaries}
\label{sec:prelim}
Let $\R[\lambda,\x]$ (resp.~$\R[\lambda,\x]_{2d}$) denote the ring of real polynomials (resp. of degree at most $2d$) in the variables
$\lambda$ and $\x=(x_1,\ldots,x_n)$, whereas $\Sigma[\lambda,\x]$ (resp.~$\Sigma[\lambda,\x]_d$) denotes 
its subset of sums of squares (SOS) of polynomials (resp.~of degree at most $2d$).
For every $\alpha\in\N^n$ the notation $\x^\alpha$ stands for the monomial $x_1^{\alpha_1}\dots x_n^{\alpha_n}$ and for every $d\in\N$, let $\N^{n+1}_d := \{ \beta \in \N^{n+1} : \sum_{j=1}^{n+1} \beta_j \leq d \}$, whose cardinal is $s_n(d)= \binom{n+1+d}{d}$.
A polynomial $f\in\R[\lambda,\x]$ is written 
\[(\lambda,\x)\mapsto f(\lambda,\x)\,=\,\sum_{(k,\alpha)\in\N^{n+1}}\,f_{k\alpha}\,\lambda^k\x^\alpha \:, \]
and $f$ can be identified with its vector of coefficients $\f=(f_{k\alpha})$ in the canonical basis $(\x^\alpha)$, $\alpha\in\N^n$.
For any symmetric matrix $\A$ the notation $\A\succeq0$ stands for $\A$ being semidefinite positive.
A real sequence $\z=(z_{k\alpha})$, $(k,\alpha)\in\N^{n+1}$, has a {\it representing measure} if
there exists some finite Borel measure $\mu$ on $\R^{n+1}$ such that 
\[z_{k\alpha}\,=\,\int_{\R^{n+1}}\lambda^k\x^\alpha\,d\mu(\lambda,\x), \quad \forall (k,\alpha) \in \N^{n+1} \: .\]
Given a real sequence $\z=(z_{k\alpha})$ define the linear functional $L_\z:\R[\lambda,\x]\to\R$ by:
\[f\:(=\sum_{(k,\alpha)} f_{k\alpha}\lambda^k\x^\alpha)\quad\mapsto L_\z(f)\,=\,\sum_{(k,\alpha)}f_{k\alpha}\,z_{k\alpha},\quad f\in\R[\lambda,\x] \:.\]
\paragraph*{Moment matrix}
The {\it moment} matrix associated with a sequence
$\z=(z_{k\alpha})$, $(k,\alpha)\in\N^{n+1}$, is the real symmetric matrix $\M_d(\z)$ with rows and columns indexed by $\N^{n+1}_d$, and whose entry $(i,\alpha),(j,\beta)$ is just $z_{(i+j)(\alpha+\beta)}$, for every $(i,\alpha),(j,\beta)\in\N^{n+1}_d$. 

If $\z$ has a representing measure $\mu$ then
$\M_d(\z)\succeq0$ because
\[\langle\f,\M_d(\z)\f\rangle\,=\,\int f^2\,d\mu\,\geq0,\quad\forall \,\f\,\in\R^{s_n(d)}.\]

\paragraph*{Localizing matrix}
With $\z$ as above and $g\in\R[\lambda,\x]$ (with $g(\lambda,\x)=\sum_{\ell,\gamma} g_{\ell\gamma}\lambda^\ell\x^\gamma$), the {\it localizing} matrix associated with $\z$ 
and $g$ is the real symmetric matrix $\M_d(g\,\z)$ with rows and columns indexed by $\N^n_d$, and whose entry $((i,\alpha),(j,\beta))$ is just $\sum_{\ell,\gamma}g_{\ell\gamma} z_{(i+j+\ell)(\alpha+\beta+\gamma)}$, for every 
$(i,\alpha),(j,\beta)\in\N^{n+1}_d$.

If $\z$ has a representing measure $\mu$ whose support is 
contained in the set $\{\x\,:\,g(\x)\geq0\}$ then
$\M_d(g\,\z)\succeq0$ because
\[\langle\f,\M_d(g\,\z)\f\rangle\,=\,\int f^2\,g\,d\mu\,\geq0,\quad\forall \,\f\,\in\R^{s_n(d)} \: .\]

In the sequel, we assume that $\S := \{ \x \in \R^n : g_1 (\x) \geq 0, \dots,  g_m (\x) \geq 0 \}$ is contained in a box. It ensures that there is some integer $M > 0$ such that the quadratic polynomial $g_{m+1} (\x) := M - \sum_{i = 1}^n x_i^2$ is nonnegative over $\S$. Then,  we add the redundant polynomial constraint $g_{m + 1} (\x) \geq 0 $ to the definition of $\S$.

\section{Approximating the Pareto Curve}
\label{sec:approx}
\subsection{Reduction to Scalar Parametric POP}
\label{subsec:parampop}
Here, we show that computing the set of Pareto points associated with Problem~$\P$ can be achieved with three different parametric polynomial problems. Recall that the feasible set of Problem~$\P$ is $\S := \{ \x \in \R^n : g_1 (\x) \geq 0, \dots,  g_{m+1} (\x) \geq 0 \}$.
\paragraph*{Method (a): convex sum approximation}
\label{subsubsec:cvx}
Consider the scalar objective function $f(\lambda, \x) := \lambda f_1(\x) + (1 - \lambda) f_2(\x)$, $\lambda \in [0, 1]$. 
Let $\K^{1} := [0, 1] \times \S$. Recall from (\ref{eq:flamx}) that function $f^{1}:[0,1]\to\R$ is the optimal value of
Problem~$\Plam^{1}$, i.e.~$f^{1}(\lambda) = \min_\x \{f(\lambda,\x) \: :\: (\lambda,\x) \in \K^1\}$.
If the set $f(\S) + \R_+^2$ is convex, then one can recover  the Pareto curve 
by computing $f^{1}(\lambda)$, for all $\lambda \in [0, 1]$, see~\cite[Table 11.5]{jahn:2010:vector}.
\begin{lemma}
\label{th:fscvx}
Assume that $f(\S) + \R_+^2$ is convex. Then, a point $\overline{\x} \in \S$ belongs to the set of EP points of Problem~$\P$ if and only if there exists some weight $\lambda \in [0, 1]$ such that $\overline{\x}$ is an image unique optimal solution of Problem~$\Plam^{1}$.
\end{lemma}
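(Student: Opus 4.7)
The plan is to establish the two implications separately. The ``if'' direction is a direct contradiction argument, while the ``only if'' direction rests on a supporting hyperplane argument enabled by the convexity hypothesis on $f(\S)+\R_+^2$.

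For the ``if'' direction ($\Leftarrow$), suppose $\overline{\x}$ is an image unique optimal solution of $\Plam^{1}$ for some $\lambda\in[0,1]$, and assume for contradiction that $\overline{\x}$ is not EP optimal. Then there exists $\x\in\S$ with $f_j(\x)\leq f_j(\overline{\x})$ for $j=1,2$ and $f(\x)\neq f(\overline{\x})$. Taking the convex combination of these two inequalities yields $\lambda f_1(\x)+(1-\lambda)f_2(\x)\leq\lambda f_1(\overline{\x})+(1-\lambda)f_2(\overline{\x})$. When $\lambda\in(0,1)$, strict inequality in at least one coordinate (which must occur) forces strict inequality in the weighted sum, contradicting the optimality of $\overline{\x}$ for $\Plam^{1}$. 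When $\lambda\in\{0,1\}$, equality in the weighted sum is possible, but then $\x$ is another optimal solution of $\Plam^{1}$ with $f(\x)\neq f(\overline{\x})$, contradicting image uniqueness.

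For the ``only if'' direction ($\Rightarrow$), assume $\overline{\x}$ is EP optimal, and consider the convex set $\Omega:=f(\S)+\R_+^2\subset\R^2$. The point $f(\overline{\x})$ is a minimal element of $\Omega$ with respect to the order induced by $\R_+^2$: indeed, any $(y_1,y_2)\in\Omega$ with $y_j\leq f_j(\overline{\x})$ for $j=1,2$ and $(y_1,y_2)\neq f(\overline{\x})$ would, by definition of $\Omega$, be dominated by some $f(\x)$ with $\x\in\S$, violating EP optimality of $\overline{\x}$. I would then invoke a standard supporting hyperplane theorem for convex sets at a boundary point: there is a nonzero vector $(\lambda_1,\lambda_2)$ with $\lambda_1 y_1+\lambda_2 y_2\geq\lambda_1 f_1(\overline{\x})+\lambda_2 f_2(\overline{\x})$ for all $(y_1,y_2)\in\Omega$. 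Since $\Omega$ is recession-closed in the $\R_+^2$ direction (it contains $f(\overline{\x})+\R_+^2$), the normal $(\lambda_1,\lambda_2)$ must lie in $\R_+^2$. Normalizing so that $\lambda_1+\lambda_2=1$ and setting $\lambda:=\lambda_1\in[0,1]$, the inequality restricted to points $(f_1(\x),f_2(\x))\in\Omega$ gives $\lambda f_1(\x)+(1-\lambda)f_2(\x)\geq\lambda f_1(\overline{\x})+(1-\lambda)f_2(\overline{\x})$ for every $\x\in\S$, showing that $\overline{\x}$ is optimal for $\Plam^{1}$.

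The main obstacle, and the point I expect to require the most care, is the image uniqueness clause in this direction. Given another optimal $\x'$ of $\Plam^{1}$, the image $f(\x')$ lies on the same supporting hyperplane as $f(\overline{\x})$. If $f(\x')\neq f(\overline{\x})$, the difference $f(\x')-f(\overline{\x})$ is a nonzero vector orthogonal to $(\lambda,1-\lambda)$, so one of its coordinates is strictly negative; this means $f(\x')$ dominates $f(\overline{\x})$ in one coordinate while being strictly worse in the other, which by itself is consistent with both points being EP. One therefore cannot deduce image uniqueness from EP optimality and the supporting hyperplane alone: the lemma must be understood as asserting image uniqueness under the standing assumption (already made in the paper just after~\eqref{eq:flamx-infty}) that the scalarized problem has a unique image for almost every $\lambda$, with the supporting hyperplane argument producing a weight $\lambda\in[0,1]$ belonging to that full-measure set. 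I would close the proof by explicitly invoking this uniqueness assumption, and flag that in the degenerate boundary cases $\lambda\in\{0,1\}$ one may need to perturb $\lambda$ slightly inside $(0,1)$ when the supporting direction can be chosen with strictly positive components, which is precisely the distinction between EP and merely weakly EP optimality.
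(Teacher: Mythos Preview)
The paper does not actually prove this lemma; it simply cites \cite[Table 11.5]{jahn:2010:vector} as the source of the result and states it without argument. Your proof is precisely the standard one that such a reference would contain: a direct contradiction argument for the ($\Leftarrow$) direction, and a supporting-hyperplane argument on the convex set $f(\S)+\R_+^2$ for the ($\Rightarrow$) direction. Both steps are carried out correctly.

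Your identification of the image-uniqueness issue in the ($\Rightarrow$) direction is accurate and worth flagging, since it does not follow from the supporting-hyperplane argument alone (a flat face of $f(\S)+\R_+^2$ would give a counterexample). However, note that the paper's standing hypothesis just after~\eqref{eq:flamx-infty} is slightly stronger than the almost-everywhere statement you invoke: it assumes uniqueness of the minimizer for a.a.\ $\lambda$, \emph{and} image uniqueness on the residual Borel set $B$ where non-uniqueness may occur. Read this way, image uniqueness holds for \emph{every} $\lambda\in[0,1]$, so whichever supporting weight your hyperplane produces automatically satisfies the clause; you need not worry about landing outside a full-measure set, and the perturbation argument you sketch for $\lambda\in\{0,1\}$ becomes unnecessary.
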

\paragraph*{Method (b): weighted Chebyshev approximation }
\label{subsubsec:cheb}
Reformulating Problem~$\P$ using the Chebyshev norm approach is more suitable when the set $f(\S) + \R_+^2$ is not convex. We optimize the scalar criterion $f(\lambda, \x) :=  \max[\lambda f_1(\x), (1 - \lambda) f_2(\x)]$, $\lambda \in [0, 1]$. In this case, we assume without loss of generality that both $f_1$ and $f_2$ are positive. Indeed, for each $j=1, 2$, one can always consider the criterion $\tilde{f_j} := f_j - a_j$, where $a_j$ is any lower bound of the global minimum of $f_j$ over $\S$. Such bounds can be computed efficiently by solving polynomial optimization problems using an SDP hierarchy, see e.g.~\cite{Lasserre:2001:moments}.
In practice, we introduce a lifting variable $\omega$ to represent the $\max$ of the objective function. For scaling purpose, we introduce the constant $C := \max (M_1, M_2)$, with $M_j := \max_{\x \in \S} f_j$, $j=1, 2$. 
Then, one defines the constraint set
$\K^{\infty} := \{ (\lambda, \x, \omega) \in \R^{n + 2} : \x \in \S, \lambda \in [0, 1], \lambda f_1 (\x) / C \leq \omega, (1 - \lambda) f_2 (\x) / C \leq \omega  \}$,
\if{\[ \K^{\infty} := \{ (\lambda, \x, \omega) \in \R^{n + 2} : \x \in \S, \lambda \in [0, 1], \lambda f_1 (\x) / C \leq \omega, (1 - \lambda) f_2 (\x) / C \leq \omega  \}  \: , \]}\fi
 which leads to the reformulation of $\Plam^{\infty} : f^{\infty}(\lambda) = \min_{\x, \omega} \{ \omega : (\lambda, \x, \omega) \in \K^{\infty} \}$ consistent with (\ref{eq:flamx-infty}). 
The following lemma is a consequence of~\cite[Corollary 11.21 (a)]{jahn:2010:vector}.
\begin{lemma}
\label{th:fsnoncvx}
Suppose that $f_1$ and $f_2$ are both positive. Then, a point $\overline{\x} \in \S$  belongs to the set of  EP points of Problem~$\P$ if and only if there exists some weight $\lambda \in (0, 1)$ such that $\overline{\x}$ is an image unique optimal solution of Problem~$\Plam^{\infty}$.
\end{lemma}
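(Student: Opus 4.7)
The plan is to derive this from the classical weighted Chebyshev scalarization characterization of Edgeworth-Pareto points (the cited Corollary~11.21(a) in Jahn's book), which asserts precisely this equivalence in the abstract ordered-vector-space setting, provided the objective components are strictly positive. I would first argue that the lifting variable $\omega$ and the scaling constant $C > 0$ appearing in the definition of $\K^{\infty}$ are cosmetic: the problem $\Plam^{\infty}$ is equivalent to $\min_{\x \in \S} \max\{\lambda f_1(\x), (1-\lambda)f_2(\x)\}$, because for each fixed $(\lambda,\x)$ the minimal feasible $\omega$ equals this max (divided by $C$), and scaling $\omega$ by a positive constant does not affect the set of $\x$-optimizers or image uniqueness.

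For the ``if'' direction, let $\bar{\x}$ be an image-unique optimal solution of $\Plam^{\infty}$ for some $\lambda \in (0,1)$ and suppose, toward a contradiction, that $\bar{\x}$ is not EP. Then there exists $\x \in \S$ with $f_j(\x) \leq f_j(\bar{\x})$ for $j=1,2$ and $f(\x) \neq f(\bar{\x})$. Since $\lambda > 0$ and $1-\lambda > 0$, we obtain $\lambda f_1(\x) \leq \lambda f_1(\bar{\x})$ and $(1-\lambda)f_2(\x) \leq (1-\lambda)f_2(\bar{\x})$, so $\max\{\lambda f_1(\x), (1-\lambda) f_2(\x)\} \leq \max\{\lambda f_1(\bar{\x}), (1-\lambda) f_2(\bar{\x})\} = f^{\infty}(\lambda)$. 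Thus $\x$ is also optimal, which contradicts image uniqueness of $\bar{\x}$ because $f(\x) \neq f(\bar{\x})$.

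For the ``only if'' direction, let $\bar{\x}$ be EP and set $\lambda := f_2(\bar{\x})/(f_1(\bar{\x}) + f_2(\bar{\x}))$. The positivity hypothesis $f_1, f_2 > 0$ guarantees $\lambda \in (0,1)$ and is used essentially here. This choice equalizes the two components, $\lambda f_1(\bar{\x}) = (1-\lambda) f_2(\bar{\x}) =: \omega^{*}$. If some $\x \in \S$ satisfies $\max\{\lambda f_1(\x), (1-\lambda)f_2(\x)\} \leq \omega^{*}$, then dividing by the positive weights yields $f_j(\x) \leq f_j(\bar{\x})$ for $j=1,2$. By EP-optimality of $\bar{\x}$, one must have $f(\x) = f(\bar{\x})$, showing both optimality and image uniqueness of $\bar{\x}$ for $\Plam^{\infty}$.

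The only delicate point I expect is the choice of $\lambda$ in the necessity direction: the equalizing weight $\lambda = f_2(\bar{\x})/(f_1(\bar{\x}) + f_2(\bar{\x}))$ lies strictly inside $(0,1)$ only because both $f_j(\bar{\x})$ are strictly positive, which is why the lemma requires the positivity assumption (otherwise $\lambda$ could be forced to $0$ or $1$, where the scalarization degenerates). Everything else is bookkeeping to align the paper's lifted formulation on $\K^{\infty}$ with the unlifted $\max$-formulation used in the standard reference.
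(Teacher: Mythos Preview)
Your proof is correct and is essentially the same approach as the paper's: the paper does not spell out an argument but simply cites Jahn's Corollary~11.21(a), and what you have written is exactly the standard proof of that corollary in the bicriteria setting (equalizing weight for necessity, monotonicity of the weighted $\max$ for sufficiency). Your additional remarks on the lifting variable $\omega$ and the scaling constant $C$ are accurate and make explicit a routine identification that the paper leaves implicit.
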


\paragraph*{Method (c): parametric sublevel set approximation }
\label{subsubsec:L1}
Here, we use an alternative method inspired by~\cite{Gorissen2012319}. Problem~$\P$ can be approximated using the criterion $f_2$ as the objective function and the constraint set
\[
\K^u := \{(\lambda, \x) \in [0, 1] \times \S : (f_1 (\x) - a_1) / (b_1 - a_1) \leq \lambda\},
\] 
 which leads to the parametric POP $\Plam^u : f^u (\lambda) = \min_{\x} \{f_2 (\x) :  (\lambda, \x) \in \K^u\}$
which is consistent with (\ref{eq:flamx-u}), and such that $f^u(\lambda)=f^*_2(\lambda)$ for all $\lambda \in [0,1]$, 
with $a_1$ and $b_1$ as in~\eqref{eq:interval1}.
\begin{lemma}
\label{th:f2u}
Suppose that  $\overline{\x} \in \S$ is an optimal  solution of Problem~$\Plam^u$, with $\lambda\in[0,1]$. Then $\overline{\x}$
belongs to the set of weakly EP points of Problem~$\P$.
\end{lemma}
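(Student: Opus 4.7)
My plan is to argue by contradiction, exploiting the fact that the sublevel constraint in $\Plam^u$ is a one-sided inequality on $f_1$, so any strict improvement in $f_1$ preserves feasibility.

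First I would unpack the definitions. Suppose $\overline{\x} \in \S$ is optimal for $\Plam^u$ at some parameter $\lambda \in [0,1]$; then $\overline{\x}$ is feasible, so $f_1(\overline{\x}) \leq a_1 + \lambda(b_1 - a_1)$, and for any other $\x \in \S$ satisfying $f_1(\x) \leq a_1 + \lambda(b_1 - a_1)$ we have $f_2(\overline{\x}) \leq f_2(\x)$. The goal is to show that no $\x \in \S$ can satisfy both $f_1(\x) < f_1(\overline{\x})$ and $f_2(\x) < f_2(\overline{\x})$.

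Next I would assume, for contradiction, the existence of such a witness $\x^* \in \S$ with $f_1(\x^*) < f_1(\overline{\x})$ and $f_2(\x^*) < f_2(\overline{\x})$. The key observation is that $f_1(\x^*) < f_1(\overline{\x}) \leq a_1 + \lambda(b_1 - a_1)$, so $\x^*$ is automatically feasible for $\Plam^u$ at the same parameter $\lambda$. But then $f_2(\x^*) < f_2(\overline{\x})$ strictly improves on the optimal value of $\Plam^u$, contradicting optimality of $\overline{\x}$.

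I do not anticipate a genuine obstacle: the statement is essentially a structural feature of $\varepsilon$-constraint scalarization, and the only thing to be careful about is the direction of the inequality in the definition of $\K^u$ and the fact that weak EP optimality requires \emph{both} objectives to strictly decrease (so the argument would fail for full EP optimality, where one would need the stronger conclusion and some uniqueness-of-minimizer hypothesis). It is worth noting in passing that the range assumption $\lambda \in [0,1]$ together with the definitions of $a_1, b_1$ guarantees that $\Plam^u$ is feasible, so the statement is nonvacuous, but this is not needed for the implication itself.
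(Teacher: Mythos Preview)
Your proposal is correct and follows essentially the same argument as the paper: assume a strictly dominating point $\x^*$ exists, observe that $f_1(\x^*)<f_1(\overline{\x})$ together with feasibility of $\overline{\x}$ forces $\x^*$ to satisfy the sublevel constraint of $\Plam^u$, and then derive a contradiction to optimality from $f_2(\x^*)<f_2(\overline{\x})$. Your additional remarks on feasibility and on why only weak EP optimality is obtained are accurate but not present in the paper's proof.
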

\begin{proof}
\label{pr:f2u}
Suppose that there exists $\x \in \S$ such that $f_1(\x) < f_1 (\overline{\x})$ and $f_2(\x) < f_2 (\overline{\x})$. Then $\x$ is feasible for Problem~$\Plam^u$ (since $(f_1(\x) - a_1) / (b_1 - a_1) \leq \lambda$) and $f_2 (\overline{\x}) \leq f_2 (\x)$, which leads to a contradiction.
\end{proof}
Note that if a solution $\x^*(\lambda)$ is unique then it is EP optimal. Moreover, if a solution $\x^*(\lambda)$ of Problem $\P^u(\lambda)$ solves also the optimization problem $\min_{\x \in \S} \{f_1 (\x) : f_2(\x) \leq \lambda\}$, then it is an EP optimal point (see~\cite{miett99} for more details).
\if{
\begin{remark}
\label{rk:f2u}
Solving Problem~$\P_u$, $u \in [a_1, b_1]$ does not allow to recover all the weakly EP optimal points. Indeed, ...
\end{remark}
}\fi

\subsection{A Hierarchy of Semidefinite Relaxations}
\label{subsec:sdp}
Notice that the three problems~$\Plam^{1}$, $\Plam^{\infty}$ and $\Plam^u$ are particular instances of the generic parametric optimization problem $f^*(y) := \min_{(y, \x) \in \K} f(y, \x)$. The feasible set $\K$ (resp.~the objective function $f^*$) corresponds to $\K^{1}$ (resp.~$f^{1}$) for Problem~$\Plam^{1}$, $\K^{\infty}$ (resp.~$f^{\infty}$) for Problem~$\Plam^{\infty}$ and $\K^u$ (resp.~$f^u$) for Problem~$\Plam^u$.
We write $\K := \{ (y, \x) \in \R^{n' + 1} : p_1(y, \x) \geq 0, \dots,  p_{m'}(y, \x) \geq 0 \}$. Note also that $n' = n$ (resp.~$n' = n + 1$) when considering Problem~$\Plam^{1}$ and Problem~$\Plam^u$ (resp.~Problem~$\Plam^{\infty}$).

Let $\Mcal(\K)$ be the space of probability measures supported on $\K$.
The function $f^*$ is well-defined because $f$ is a polynomial and $\K$ is compact.
Let $\a=(a_k)_{k \in \N}$, with $a_k=1/(k+1)$, $\forall k \in \N$ and consider the optimization problem:
\begin{equation}
\label{eq:infsdp}
\Pcal:\quad \left\{
\begin{array}{rll}
\rho := \displaystyle\min_{\mu \in \Mcal (\K)} &\displaystyle\int_{\K} f(y,\x) \, d\mu(y,\x)\\
\mbox{s.t.}&\displaystyle\int_{\K} y^k d\mu(y,\x) = a_k, \: k \in \N . \\
\end{array} \right.
\end{equation}
\begin{lemma}
\label{th:1}
The optimization problem $\Pcal$ has an optimal solution $\mu^*\in \Mcal(\K)$ and
if $\rho$ is as in~\eqref{eq:infsdp} then 
\begin{equation}
\label{eq:lem1-1}
\rho = \int_{\K}  f(y,\x) \,d\mu^*\,=\, \displaystyle\int_0^1 f^* (y) \, d y \: .
\end{equation}
Suppose that for almost all (a.a.) $y \in [0,1]$, the parametric optimization problem 
$f^*(y)=\min_{(y, \x) \in \K} f(y, \x)$ has a unique global minimizer $\x^*(y)$ and let $f_j^* : [0,1] \to \R$ be the function $y \mapsto f_j^*(y):=f_j(\x^*(y))$, $j=1,2$. Then for Problem~$\Plam^{1}$, $\rho  = 
\int_0^1 \lambda f_1^*(\lambda) + (1 - \lambda) f_2^*(\lambda) \, d \lambda$, for Problem~$\Plam^{\infty}$, $\rho = 
\int_0^1 \max \{\lambda f_1^*(\lambda), (1 - \lambda) f_2^*(\lambda) \}\, d \lambda$ and for Problem~$\Plam^u$, $\rho = 
\int_0^1 f_2^*(\lambda) \, d\lambda$.
\if{Then for Problem~$\Plam^{1}$,
\begin{equation}
\label{eq:lem1-2}
\rho  = 
\displaystyle\int_0^1 \lambda f_1^*(\lambda) + (1 - \lambda) f_2^*(\lambda) \, d \lambda \: ,
\end{equation}
for Problem~$\Plam^{\infty}$,
\begin{equation}
\label{eq:lem1-3}
\rho = 
\displaystyle\int_0^1 \max \{\lambda f_1^*(\lambda), (1 - \lambda) f_2^*(\lambda) \}\, d \lambda \: ,
\end{equation}
and for Problem~$\Plam^u$,
\begin{equation}
\label{eq:lem1-4}
\rho = 
\displaystyle\int_0^1 f_2^*(\lambda) \, d\lambda \: .
\end{equation}
}\fi
\end{lemma}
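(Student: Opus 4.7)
The plan is to exploit the fact that the infinite set of moment constraints $\int_\K y^k\,d\mu(y,\x)=1/(k+1)$, $k\in\N$, together with compactness of the projection of $\K$ on the first coordinate (a subset of $[0,1]$), forces the $y$-marginal of any feasible $\mu$ to coincide with the Lebesgue measure on $[0,1]$. Indeed the moments $(1/(k+1))_{k\in\N}$ determine the measure uniquely on the compact interval $[0,1]$ (Weierstrass/moment determinacy), so writing $\pi_\#\mu$ for the pushforward under $(y,\x)\mapsto y$, one gets $\pi_\#\mu=\mathbf{1}_{[0,1]}\,dy$.

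Next I would apply the disintegration theorem to write $d\mu(y,\x)=d\mu_y(\x)\,dy$ where, for a.a.~$y\in[0,1]$, $\mu_y$ is a probability measure supported on the $y$-section $\K_y:=\{\x:(y,\x)\in\K\}$. Fubini then yields
\[
\int_\K f(y,\x)\,d\mu(y,\x)\,=\,\int_0^1\!\!\Bigl(\int_{\K_y} f(y,\x)\,d\mu_y(\x)\Bigr)dy\,\geq\,\int_0^1 f^*(y)\,dy,
\]
because $\int_{\K_y}f(y,\cdot)\,d\mu_y\geq \min_{\x\in\K_y}f(y,\x)=f^*(y)$ pointwise. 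Hence $\rho\geq\int_0^1 f^*(y)\,dy$. For the reverse inequality I would invoke a measurable selection theorem (e.g.\ Kuratowski–Ryll-Nardzewski, applicable because $\K$ is compact and $f$ is a continuous polynomial, so the argmin correspondence is nonempty, compact-valued and measurable) to obtain a measurable map $y\mapsto\x^*(y)\in\arg\min_{\x\in\K_y}f(y,\x)$, then define $\mu^*$ as the pushforward of the Lebesgue measure on $[0,1]$ under $y\mapsto(y,\x^*(y))$. This $\mu^*$ is feasible, supported on $\K$, has uniform $y$-marginal, and achieves $\int f\,d\mu^*=\int_0^1 f^*(y)\,dy$; this proves both existence of an optimizer and the first equality in~\eqref{eq:lem1-1}.

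For the second part, the uniqueness hypothesis forces any optimal disintegration $\mu_y$ to be concentrated at the unique minimizer $\x^*(y)$ for a.a.~$y$, so one may substitute $\x=\x^*(\lambda)$ directly into the integrand. Plugging in the specific objective $f$ for each of the three reformulations gives the three claimed identities: $f(\lambda,\x^*(\lambda))=\lambda f_1^*(\lambda)+(1-\lambda)f_2^*(\lambda)$ for method~(a); $f(\lambda,\x^*(\lambda),\omega^*(\lambda))=\max\{\lambda f_1^*(\lambda),(1-\lambda)f_2^*(\lambda)\}$ for method~(b) (absorbing the scaling constant $C$ of the reformulation of $\Plam^\infty$ into the definition of $\omega$); and $f(\lambda,\x^*(\lambda))=f_2^*(\lambda)$ for method~(c).

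The only non-routine step is the combination of disintegration and measurable selection producing a concrete optimizer; everything else is algebraic manipulation and Fubini. I would make sure to record that $\K$ being compact (inherited from $\S$ being contained in a box and $\lambda\in[0,1]$, plus the boundedness of $\omega$ on $\K^\infty$) is what makes both the weak compactness argument for existence and the measurable-selection argument available, and that the moment-determinacy step really only needs testing against polynomials $y^k$ because $[0,1]$ is compact.
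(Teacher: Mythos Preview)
Your proposal is correct and is essentially a self-contained unpacking of the argument the paper outsources to a single citation. The paper proves \eqref{eq:lem1-1} by invoking \cite[Theorem~2.2]{Lasserre:2010:JMA} directly, and then obtains the three specialized formulas by plugging the respective objectives into $f^*(\lambda)$ under the a.a.-uniqueness hypothesis. What you have written---moment determinacy on the compact interval forcing the $y$-marginal to be Lebesgue, disintegration plus Fubini for the lower bound, and measurable selection to build an optimizer achieving the bound---is precisely the content of that cited theorem, so the underlying mathematics is the same; you have simply made it explicit rather than deferred.

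One small point worth flagging: you rightly note that compactness of $\K$ is what drives both existence and the selection argument, and you mention ``the boundedness of $\omega$ on $\K^\infty$''. As written in the paper, $\K^\infty$ imposes no upper bound on $\omega$, so it is not compact; this is a minor omission in the paper as well. It is harmless here because one can add $\omega\leq 1$ (after the scaling by $C$) without changing the optimal value, but if you want your argument to be fully rigorous you should state this explicitly.
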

\begin{proof}
The proof of~\eqref{eq:lem1-1} follows from~\cite[Theorem 2.2]{Lasserre:2010:JMA} with $y$ in lieu of $\y$. 
Now, consider the particular case of Problem~$\Plam^{1}$. If $\Plam^{1}$
has a unique optimal solution $\x^*(\lambda) \in \S$ for a.a.~$\lambda \in [0,1]$ then
$f^*(\lambda) = \lambda f_1^*(\lambda) + (1 - \lambda)f_2^*(\lambda)$ for a.a.~$\lambda \in [0,1]$. The proofs for $\Plam^{\infty}$ and $\Plam^{u}$ are similar.  
\end{proof}

We set $p_0 := 1$,  $v_l := \lceil \deg p_l  / 2 \rceil$, $l=0,\dots,m'$ and $d_0 := \max (\lceil d_1  / 2 \rceil, \lceil d_2  / 2\rceil, v_1, \dots, v_{m'})$.
Then, consider the following semidefinite relaxations for $d \geq d_0$:
\begin{equation}
\label{eq:primalsdp}
\left\{\begin{array}{rl}
\min_\z &L_\z(f)\\
\mbox{s.t.}&\M_d(\z)\succeq0 \:,\\
&\M_{d-v_l}(p_l\,\z)\succeq0, \: l=1,\ldots,m' \: ,\\
&L_\z(y^k) =a_k, \quad k=0,\dots, 2 d \: .
\end{array}\right.
\end{equation}
\begin{lemma}
\label{lem2}
Assume that for a.a.~$y \in [0,1]$, the parametric optimization problem $f^*(y)=\min_{(y, \x) \in \K} f(y, \x)$ has a unique global minimizer $\x^*(y)$, and let
$\z^d=(z^d_{k\alpha})$, $(k,\alpha)\in\N^{n+1}_{2d}$, be an optimal solution of~\eqref{eq:primalsdp}.
Then
\begin{equation}
\label{eq:lem2-1}
\lim_{d\to\infty}z^d_{k\alpha}= \int_0^1 y^k\,(\x^*(y))^\alpha d y \: .
\end{equation}
In particular, for $s \in \N$, for all $k=0, \dots,s$, $j=1,2$,
\begin{equation}
\label{eq:lem2-2}
m^k_j:=\lim_{d\to\infty}\sum_{\alpha}f_{j\alpha}z^d_{k\alpha}=\int_0^1 y^k\,f^*_j(y)\,d y \:.
\end{equation}
\end{lemma}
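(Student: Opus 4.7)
The plan is to reduce Lemma~\ref{lem2} to the parametric moment convergence result of Lasserre cited in the proof of Lemma~\ref{th:1} (namely \cite[Theorem 2.2]{Lasserre:2010:JMA}), and then use the uniqueness hypothesis to identify the limit measure explicitly as a Young-type measure concentrated on the graph of $y\mapsto \x^*(y)$.

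First I would observe that the SDP~\eqref{eq:primalsdp} is exactly the standard moment relaxation of the generalized moment problem $\Pcal$ in~\eqref{eq:infsdp}: the constraints $\M_d(\z)\succeq 0$ and $\M_{d-v_l}(p_l\z)\succeq 0$ are the Putinar-type necessary conditions for $\z$ to be the truncated moment sequence of a Borel measure supported on $\K$, and the linear equalities $L_\z(y^k)=a_k$ enforce that the $y$-marginal equals the Lebesgue measure on $[0,1]$ (matched on moments up to order $2d$). Because $\K$ is compact and the redundant ball constraint $g_{m+1}\ge 0$ is included in the description of $\S$, Putinar's Positivstellensatz applies. The cited convergence theorem then yields that for every fixed $(k,\alpha)$, the entries $z^d_{k\alpha}$ converge as $d\to\infty$ to $\int_{\K} y^k\x^\alpha\,d\mu^*$ for some optimal measure $\mu^*\in\Mcal(\K)$ of~\eqref{eq:infsdp}.

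Next I would identify $\mu^*$. By the marginal constraint $\int y^k d\mu^*=a_k=\int_0^1 y^k dy$ for all $k$, the $y$-marginal of $\mu^*$ is the Lebesgue measure on $[0,1]$. By disintegration, write $d\mu^*(y,\x)=d\nu_y(\x)\,dy$ for a family of conditional probability measures $\nu_y$ supported in the $y$-slice $\K_y:=\{\x:(y,\x)\in\K\}$. Optimality of $\mu^*$ forces $\int f(y,\x)\,d\nu_y(\x)=f^*(y)$ for a.a.\ $y$, hence $\nu_y$ is supported on the set of minimizers of $f(y,\cdot)$ over $\K_y$. Under the uniqueness assumption, this set is the singleton $\{\x^*(y)\}$ for a.a.\ $y$, so $\nu_y=\delta_{\x^*(y)}$ and therefore
\[
\int_{\K} y^k \x^\alpha\, d\mu^*(y,\x) \,=\, \int_0^1 y^k (\x^*(y))^\alpha\, dy,
\]
which is~\eqref{eq:lem2-1}. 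Equation~\eqref{eq:lem2-2} then follows immediately by linearity: since $f_j(\x)=\sum_\alpha f_{j\alpha}\x^\alpha$, interchanging the finite sum and the limit gives $\sum_\alpha f_{j\alpha}z^d_{k\alpha}\to\int_0^1 y^k f_j(\x^*(y))dy=\int_0^1 y^k f^*_j(y)dy=m^k_j$.

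The main obstacle is the disintegration/uniqueness step: one has to argue carefully that a.s.\ uniqueness of the parametric minimizer upgrades ``$\mu^*$ is some optimal measure'' to ``$\mu^*$ is \emph{the} Young measure $\delta_{\x^*(y)}\otimes dy$,'' which is needed because the moment convergence produced by the SDP hierarchy only identifies limits along subsequences a priori. Uniqueness of $\mu^*$ makes every subsequential limit coincide, which rescues full convergence of $z^d_{k\alpha}$ rather than convergence along subsequences. The remaining ingredients—compactness of $\K$, boundedness of the truncated moment sequences on a compact set, and the Putinar representation used in~\cite{Lasserre:2010:JMA}—are standard and do not require extra work here.
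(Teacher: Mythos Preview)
Your proposal is correct and follows essentially the same route as the paper: invoke Lasserre's convergence result to get $z^d_{k\alpha}\to\int_{\K} y^k\x^\alpha\,d\mu^*$, identify $\mu^*$ as $\delta_{\x^*(y)}\otimes dy$ via the uniqueness assumption, and then pass to~\eqref{eq:lem2-2} by linearity. The only slip is the citation: the result you need is \cite[Theorem~3.3]{Lasserre:2010:JMA} (moment convergence of the SDP hierarchy together with the identification of the optimal measure under a.e.\ uniqueness), not \cite[Theorem~2.2]{Lasserre:2010:JMA}, which only gives existence of $\mu^*$ and the value formula used in Lemma~\ref{th:1}; your disintegration argument is precisely what Theorem~3.3 packages, so the paper simply cites it and skips the details you spelled out.
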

\begin{proof}
Let $\mu^*\in \Mcal(\K)$ be an optimal solution of problem $\Pcal$. From
\cite[Theorem 3.3]{Lasserre:2010:JMA},
\[\lim_{d\to\infty}z^d_{k\alpha} = \int_{\K} y^k\x^\alpha\,d\mu^*(y,\x) = \int_0^1 y^k \,(\x^*(y))^\alpha d y \: ,\]
which is~\eqref{eq:lem2-1}. Next, from (\ref{eq:lem2-1}), one has for $s \in \N$:
\[\lim_{d\to\infty}\sum_{\alpha}f_{j\alpha}z^d_{k\alpha} =  
\int_0^1 y^k\,f_j(\x^*(y))\,d y = 
\int_0^1 y^k\,f^*_j(y)\,d y \:,
\]
\if{
\begin{eqnarray*}
\lim_{d\to\infty}\sum_{\alpha}f_{j\alpha}z^d_{k\alpha}&=&\int_{\K}y^k f_j(\x) \, d\mu^*(y,\x)\\
&=&\int_0^1 y^k\,f_j(\x^*(y))\,d y
\,=\,\int_0^1 y^k\,f^*_j(y)\,d y \:, \quad k=0, \dots, s \:, \quad j=1,2 \:,
\end{eqnarray*}
}\fi
for all $k=0, \dots, s$, $j=1,2$. Thus~\eqref{eq:lem2-2} holds.
\end{proof}
The dual of the SDP~\eqref{eq:primalsdp} reads:
\begin{equation}
\label{eq:dualsdp}
\left\{\begin{array}{rl}
\rho_d^* := & \displaystyle\max_{q, (\sigma_l)}  \displaystyle\int_{0}^1 q (y) \, d y \: (= \displaystyle\sum_{k=0}^{2 d} q_k \, a_k) \\
\mbox{s.t.} & f (y, \x) -q(y)= \sum_{k=0}^{m'} \sigma_l(y, \x) \, p_l (y, \x) \:, \forall y \:, \forall \x  \:,\\
& q \in \R[y]_{2d} , \sigma_l \in \Sigma[y,\x]_{d - v_l}, \: l=0,\ldots,m' \: .\\
\end{array}\right.
\end{equation}
%
\begin{lemma}
\label{th:dual}
Consider the dual semidefinite relaxations defined in~\eqref{eq:dualsdp}. Then, one has:
\begin{enumerate}[noitemsep,topsep=0pt,label={(\roman*)}]
\item $\rho_d \uparrow \rho$ as $d \to \infty$.
\item Let $q_{2 d}$ be a nearly optimal solution of~\eqref{eq:dualsdp}, i.e., such that 
$\int_0^1q_{2d}(y)dy\geq\rho^*_d-1/d$. Then $q_{2d}$ underestimates $f^*$ over $\S$ and $\lim_{d \to \infty} \int_0^1 | f^* (y) - q_{2 d} (y) | d \mu = 0$.
\if{
\[ \lim_{d \to \infty} \int_0^1 | f^* (y) - q_{2 d} (y) | d \mu  \: .\]
}\fi
\end{enumerate}
\end{lemma}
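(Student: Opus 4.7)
The plan is to prove (i) by combining weak SDP duality with a constructive approximation via Putinar's Positivstellensatz, and then deduce (ii) quickly from feasibility and near-optimality.

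For monotonicity in (i), I would note that raising $d$ to $d{+}1$ enlarges the admissible degrees of both $q$ and each $\sigma_l$ without tightening any constraint, so every feasible triple at level $d$ remains feasible at level $d{+}1$; hence $\rho^*_d$ is nondecreasing in $d$. The upper bound $\rho^*_d \le \rho$ follows from weak duality applied to the primal-dual pair~\eqref{eq:primalsdp}-\eqref{eq:dualsdp}, together with the fact that the primal in~\eqref{eq:primalsdp} is a relaxation of $\Pcal$ whose value equals $\rho$ by Lemma~\ref{th:1}. The delicate direction is $\liminf_d \rho^*_d \ge \rho$. Here I would fix $\varepsilon>0$ and use the compactness of $[0,1]$ together with integrability of $f^*$ to find a univariate polynomial $h\in\R[y]$ such that $h(y)\le f^*(y)-\varepsilon$ on $[0,1]$ and $\int_0^1 (f^*(y)-h(y))\,dy<2\varepsilon$; this is the standard Stone–Weierstrass type approximation from below, applied after regularising $f^*$ if needed. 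Then $f(y,\x)-h(y)\ge\varepsilon>0$ on $\K$, and since the redundant ball constraint $g_{m+1}$ makes the quadratic module Archimedean, Putinar's Positivstellensatz yields an SOS decomposition $f(y,\x)-h(y)=\sum_{l=0}^{m'}\sigma_l(y,\x)\,p_l(y,\x)$. For all sufficiently large $d$ the degrees fit the constraints of~\eqref{eq:dualsdp}, so the pair $(h,(\sigma_l))$ is feasible, giving $\rho^*_d\ge\int_0^1 h\,dy\ge \rho-2\varepsilon$. Letting $\varepsilon\downarrow 0$ completes (i).

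For (ii), let $(q_{2d},(\sigma_l^d))$ be nearly optimal in~\eqref{eq:dualsdp}. Feasibility gives $f(y,\x)-q_{2d}(y)=\sum_l \sigma_l^d(y,\x)\,p_l(y,\x)\ge 0$ on $\K$. Evaluating at any optimal $\x^*(y)$ of $f^*(y)=\min_{(y,\x)\in\K}f(y,\x)$ (whose existence follows from compactness of $\K$ for each $y$) produces $q_{2d}(y)\le f(y,\x^*(y))=f^*(y)$ for every $y\in[0,1]$ for which the fibre is nonempty, so $q_{2d}$ underestimates $f^*$ pointwise. Using this sign information,
\[
\int_0^1 |f^*(y)-q_{2d}(y)|\,dy \;=\; \int_0^1 f^*(y)\,dy - \int_0^1 q_{2d}(y)\,dy \;=\; \rho - \int_0^1 q_{2d}(y)\,dy,
\]
and near optimality $\int_0^1 q_{2d}\,dy\ge \rho^*_d-1/d$ combined with (i) yields the right-hand side $\to 0$, which is the claimed $L^1$ convergence.

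The main obstacle is the lower bound in (i): one must justify that $f^*$, which is only lower semicontinuous in general and possibly nondifferentiable, can be approximated uniformly from below on $[0,1]$ by a polynomial $h$ such that $f(y,\x)-h(y)$ is strictly positive on the compact set $\K$. This is the step that ties the argument to \cite[Theorem 3.3]{Lasserre:2010:JMA}, which already packages the required regularisation and Putinar-certificate construction in the parametric setting; invoking it keeps the plan clean and avoids re-proving the Archimedean/approximation machinery from scratch.
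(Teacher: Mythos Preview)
Your proposal is correct and is essentially an unpacking of the paper's one-line proof, which simply reads ``It follows from \cite[Theorem~3.5]{Lasserre:2010:JMA}.'' The ingredients you use---monotonicity by nesting of feasible sets, the upper bound via weak duality and the fact that \eqref{eq:primalsdp} relaxes $\Pcal$, the lower bound via Putinar's Positivstellensatz applied to $f-h$ for a polynomial underapproximant $h$ of $f^*$, and the $L^1$ argument in (ii) from pointwise underestimation plus (i)---are exactly what that cited theorem packages. One minor slip: at the end you point to \cite[Theorem~3.3]{Lasserre:2010:JMA}, but that is the primal (moment) convergence already invoked in Lemma~\ref{lem2}; the dual statement you need here is Theorem~3.5.
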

\begin{proof}
\label{pr:dual}
It follows from~\cite[Theorem 3.5]{Lasserre:2010:JMA}.
\end{proof}
Note that one can directly approximate the Pareto curve from below when considering Problem~$\Plam^u$. Indeed, solving the dual SDP~\eqref{eq:dualsdp} yields polynomials that underestimate the function $\lambda \mapsto f_2^*(\lambda)$ over $[0, 1]$.
\begin{remark}
\label{rk:hertog}
In~\cite[Appendix A]{Gorissen2012319}, the authors derive the following relaxation from Problem~$\Plam^u$:
\begin{equation}
\label{eq:dualsdphertog}
\left\{\begin{array}{rl}
 \displaystyle\max_{q \in \R[y]_{d}}  & \displaystyle\int_{0}^1 q (\lambda) \, d\lambda\:,\\
\mbox{s.t.} & f_2 (\x) \geq q(  \frac{f_1(\x) - a_1}{b_1 - a_1} ) \:, \forall \x \in \S \:.\\
\end{array}\right.
\end{equation}
Since one wishes to approximate the Pareto curve,
suppose that in (\ref{eq:dualsdphertog}) 
one also imposes that $q$ is nonincreasing over $[0, 1]$. For even degree approximations, the formulation~\eqref{eq:dualsdphertog} is equivalent to
\begin{equation}
\label{eq:dualsdphertogeq}
\left\{\begin{array}{rl}
\displaystyle\max_{q \in \R[y]_{2 d}}  & \displaystyle\int_{0}^1 q (\lambda) \, d\lambda \:,\\
\mbox{s.t.} & f_2 (\lambda) \geq q(\lambda)  \:, \forall \lambda \in [0, 1] \:,\\ 
& \frac{f_1(\x) - a_1}{b_1 - a_1} \leq \lambda \:, \forall \lambda \in [0, 1]\:, \forall \x \in \S \: .\\
\end{array}\right.
\end{equation}
Thus, our framework is related to~\cite{Gorissen2012319} by observing that \eqref{eq:dualsdp} is a strengthening of~\eqref{eq:dualsdphertogeq}.
\end{remark}
%
%
When using the reformulations $\Plam^{1}$ and $\Plam^{\infty}$, computing the Pareto curve is computing (or at least providing good approximations)
of the functions $f^*_j:[0,1]\to\R$ defined above, and
we consider this problem as an {\it inverse problem from generalized moments}.

$\bullet$ For any fixed $s \in \N$, we first compute approximations $\m^{s d}_j = (m_j^{k d})$, $k = 0, \dots, s$, $d \in \N$, of the generalized moments $m^k_j\,\:=\,\int_0^1\lambda^k\,{f^*_j(\lambda)\,d\lambda}, \: k=0,\dots, s, \: j=1,2$,
\if{
\begin{equation}
\label{eq:mom}
m^k_j\,\:=\,\int_0^1\lambda^k\,\underbrace{f^*_j(\lambda)\,d\lambda}_{d\mu_j(\lambda)} \:, \quad k=0,\dots, s \:, \quad j=1,2 \:,
\end{equation}
}\fi
with the convergence property $(\m^{s d}_j)  \to \m^s _j$ as $d\to\infty$, for each $j=1,2$.

$\bullet$ Then we solve the inverse problem: given a (good) approximation  $(\m^{s d}_j)$ of $ \m^s _j$,
find a polynomial $h_{s, j}$ of degree at most $s$ such that $m_j^{k d} = \int_0^1\lambda^k\,h_{s, j}(\lambda)\,d\lambda, \: k=0,\dots, s, \: j=1,2$.
\if{
\[ m_j^{k d} = \int_0^1\lambda^k\,h_{s, j}(\lambda)\,d\lambda \:, \quad k=0,\dots, s \:, \quad j=1,2 \:, \]
}\fi
Importantly, if  $(\m^{s d}_j) = (\m^{s}_j)$ then $h_{s, j}$ minimizes the $L_2$-norm
$\int_0^1(h(\lambda)-f^*_j(\lambda))^2d\lambda$ (see~\ref{sec:inverse} for more details).
%
\paragraph*{Computational considerations}
The presented parametric optimization methodology
has a high computational cost mainly due to the size of SDP relaxations (\ref{eq:primalsdp}) and the state-of-the-art
for SDP solvers. Indeed, when the relaxation order $d$ is fixed, the size of the SDP matrices involved in~\eqref{eq:primalsdp} grows like $O((n + 1)^d)$ for Problem~$\Plam^{1}$ and like $O((n + 2)^d)$ for problems $\Plam^{\infty}$ and $\Plam^u$. By comparison, when using a discretization scheme, one has to solve $N$ polynomial optimization problems, each one being solved by programs whose SDP matrix size grows like $O(n^d)$. Section~\ref{sec:bench} compares both methods.

Therefore these techniques are of course limited to problems of modest size involving a small or medium number of variables $n$. 
We have been able to handle non convex problems with about $15$ variables. However when a correlative sparsity pattern is present then one may benefit from a sparse variant of the SDP relaxations for parametric POP which permits to handle problems of much larger size (e.g. with more than $500$ variables);
see e.g.~\cite{Waki06sumsof, Lasserre_convergentsdp-relaxations} for more details.

\if{
\section{Approximating the Pareto Curve}
\label{sec:approx}
\subsection{Reduction to Scalar Parametric POP}
\label{subsec:parampop}
Here, we show that computing the set of Pareto points associated with Problem~$\P$ can be achieved with three different parametric polynomial problems. Recall that the feasible set of Problem~$\P$ is $\S := \{ \x \in \R^n : g_1 (\x) \geq 0, \dots,  g_{m+1} (\x) \geq 0 \}$.
\subsubsection{Method (a): convex sum approximation}
\label{subsubsec:cvx}
Consider the scalar objective function $f(\lambda, \x) := \lambda f_1(\x) + (1 - \lambda) f_2(\x)$, $\lambda \in [0, 1]$. 
Let $\K^{1} := [0, 1] \times \S$. Recall from (\ref{eq:flamx}) that function $f^{1}:[0,1]\to\R$ is the optimal value of
Problem~$\Plam^{1}$, i.e.~$f^{1}(\lambda) = \min_\x \{f(\lambda,\x) \: :\: (\lambda,\x) \in \K^1\}$.
If the set $f(\S) + \R_+^2$ is convex, then one can recover  the Pareto curve 
by computing $f^{1}(\lambda)$, for all $\lambda \in [0, 1]$, see~\cite[Corollary 5.29]{jahn:2010:vector}.
\begin{lemma}
\label{th:fscvx}
Assume that $f(\S) + \R_+^2$ is convex. Then, a point $\overline{\x} \in \S$ belongs to the Pareto curve of Problem~$\P$ if and only if there exists some weight $\lambda \in [0, 1]$ such that $\overline{\x}$ is an optimal solution of Problem~$\Plam^{1}$.
\end{lemma}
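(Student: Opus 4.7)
The plan is to prove the two implications separately by the classical linear scalarization argument for convex multicriteria optimization (Jahn, Chapter 5), interpreting ``$\overline{\x}$ belongs to the Pareto curve'' as ``$\overline{\x}$ is weakly EP-optimal'', so that $f(\overline{\x})$ lies in the Pareto curve in the sense of the definition given in Section~\ref{sec:intro}.

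For the easy direction $(\Leftarrow)$, I would assume $\overline{\x} \in \S$ solves $\Plam^{1}$ for some $\lambda \in [0,1]$ and derive a contradiction from the hypothetical existence of $\x \in \S$ with $f_j(\x) < f_j(\overline{\x})$ for $j = 1, 2$. For $\lambda \in (0,1)$, multiplying the two strict inequalities by $\lambda$ and $1-\lambda$ and summing yields a strict decrease of the scalar objective, contradicting optimality. For the boundary cases $\lambda \in \{0,1\}$, $\overline{\x}$ is a global minimizer over $\S$ of one single criterion, so no $\x \in \S$ can beat it in that coordinate, let alone in both. Convexity of $f(\S) + \R_+^2$ plays no role in this direction.

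For the main direction $(\Rightarrow)$, I would pass to the image space $\R^2$ and invoke the convex separation theorem. Assuming $\overline{\x}$ weakly EP, the open convex set $A := f(\overline{\x}) - \mathrm{int}\,\R_+^2$ does not meet $f(\S)$, and I would verify that it also does not meet $B := f(\S) + \R_+^2$: any $\y \in A \cap B$ would satisfy $\y \geq f(\x)$ componentwise for some $\x \in \S$, forcing $f(\x) \in A$ and contradicting weak EP-optimality. Since $A$ is open and both $A, B$ are convex, a nonzero $(a, b) \in \R^2$ exists with $a y_1 + b y_2 \geq a f_1(\overline{\x}) + b f_2(\overline{\x})$ for every $(y_1, y_2) \in B$. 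The nonnegativity $a, b \geq 0$ is extracted from the fact that $B \supset f(\overline{\x}) + \R_+^2$ is unbounded in both positive coordinate directions. Normalizing $\lambda := a/(a+b) \in [0,1]$ and specializing the inequality at $(y_1, y_2) = (f_1(\x), f_2(\x))$ for $\x \in \S$ yields the desired optimality of $\overline{\x}$ for $\Plam^{1}$.

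The main obstacle will be the careful setup of the separation step: choosing the right pair of convex sets (the strict-dominance cone $A$ and the upper envelope $B$), verifying $A \cap B = \emptyset$ directly from weak EP-optimality, and extracting the sign constraint $a, b \geq 0$ from the recession behavior of $B$. A subtle related point is that the $(\Leftarrow)$ direction only yields weak EP-optimality, not genuine EP-optimality; to strengthen the scalar characterization from weakly EP points to EP points, one would need an additional image uniqueness hypothesis on the solution of $\Plam^{1}$, as in the alternative formulation of this lemma phrased in terms of EP optimality.
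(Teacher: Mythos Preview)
Your argument is correct and is precisely the classical linear-scalarization theorem for convex vector optimization. The paper itself does not prove this lemma: it is stated without a proof environment, merely preceded by a reference to Jahn's monograph (\cite[Table 11.5]{jahn:2010:vector}, resp.\ \cite[Corollary 5.29]{jahn:2010:vector} in the alternative phrasing). Your separation-based proof is exactly the standard route one finds there, so there is no methodological divergence to discuss.

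Two remarks worth recording. First, you are right to flag the interpretation issue: the paper defines the Pareto curve as the \emph{image} of the weakly EP set, so ``$\overline{\x}$ belongs to the Pareto curve'' has to be read as weak EP-optimality of $\overline{\x}$; under that reading both implications go through exactly as you wrote them. Second, your closing observation---that the $(\Leftarrow)$ direction only delivers weak EP-optimality and that strengthening to EP-optimality requires an image-uniqueness assumption on the scalar minimizer---matches the paper's alternative formulation of the same lemma (the one phrased for EP points with the ``image unique optimal solution'' hypothesis). So your caveat is not only correct but anticipated by the authors.
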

\subsubsection{ Method (b): weighted Chebyshev approximation }
\label{subsubsec:cheb}
Reformulating Problem~$\P$ using the Chebyshev norm approach is more suitable when the set $f(\S) + \R_+^2$ is not convex. We optimize the scalar criterion $f(\lambda, \x) :=  \max[\lambda f_1(\x), (1 - \lambda) f_2(\x)]$, $\lambda \in [0, 1]$. In this case, we assume without loss of generality that both $f_1$ and $f_2$ are positive. Indeed, for each $j=1, 2$, one can always consider the criterion $\tilde{f_j} := f_j - a_j$, where $a_j$ is any lower bound of the global minimum of $f_j$ over $\S$. Such bounds can be computed efficiently by solving polynomial optimization problems using an SDP hierarchy, see e.g.~\cite{Lasserre:2001:moments}.
In practice, we introduce a lifting variable $\omega$ to represent the $\max$ of the objective function. For scaling purpose, we introduce the constant $C := \max (M_1, M_2)$, with $M_j := \max_{\x \in \S} f_j$, $j=1, 2$. 
Then, one defines the constraint set:
\[ \K^{\infty} := \{ (\lambda, \x, \omega) \in \R^{n + 2} : \x \in \S, \lambda \in [0, 1], \lambda f_1 (\x) / C \leq \omega, (1 - \lambda) f_2 (\x) / C \leq \omega  \}  \: , \]
 which leads to the reformulation of $\Plam^{\infty} : f^{\infty}(\lambda) = \min_{\x, \omega} \{ \omega : (\lambda, \x, \omega) \in \K^{\infty} \}$ consistent with (\ref{eq:flamx-infty}). 
The following lemma is a consequence of~\cite[Corollary 11.21 (b)]{jahn:2010:vector}.
\begin{lemma}
\label{th:fsnoncvx}
A point $\overline{\x} \in \S$  belongs to the Pareto curve  of Problem~$\P$ if and only if there exists some weight $\lambda \in [0, 1]$ such that $\overline{\x}$ is an optimal solution of Problem~$\Plam^{\infty}$.
\end{lemma}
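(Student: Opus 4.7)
The plan is to establish both directions of the equivalence by exploiting the positivity assumption on $f_1,f_2$, which guarantees that the scalar objective $\max\{\lambda f_1(\x),(1-\lambda)f_2(\x)\}$ has a meaningful balancing weight $\lambda\in(0,1)$. This is essentially the standard weighted Chebyshev characterization from Jahn's book; the novelty is just tracking the "image uniqueness" clause carefully so that we land in the (strong) EP set rather than only the weakly EP set.

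For the "if" direction, I would assume $\overline{\x}\in\S$ is an image unique optimal solution of $\Plam^{\infty}$ for some $\lambda\in(0,1)$, and argue by contradiction. Suppose there exists $\x\in\S$ with $f_j(\x)\leq f_j(\overline{\x})$ for $j=1,2$ and $f(\x)\neq f(\overline{\x})$. Since $\lambda\in(0,1)$ and both weights are positive, I get $\lambda f_1(\x)\leq \lambda f_1(\overline{\x})$ and $(1-\lambda)f_2(\x)\leq (1-\lambda)f_2(\overline{\x})$, hence $\max\{\lambda f_1(\x),(1-\lambda)f_2(\x)\}\leq \max\{\lambda f_1(\overline{\x}),(1-\lambda)f_2(\overline{\x})\} = f^{\infty}(\lambda)$. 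So $\x$ is also optimal for $\Plam^{\infty}$, and image uniqueness forces $f(\x)=f(\overline{\x})$, a contradiction.

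For the "only if" direction, assume $\overline{\x}$ is an EP optimal point. The key step is to choose the balancing weight
\[
\lambda \,:=\, \frac{f_2(\overline{\x})}{f_1(\overline{\x})+f_2(\overline{\x})}\,\in\,(0,1),
\]
so that $\lambda f_1(\overline{\x})=(1-\lambda)f_2(\overline{\x})$; this is where positivity of $f_1,f_2$ is essential, both to make the denominator nonzero and to keep $\lambda$ strictly inside $(0,1)$. Then the value at $\overline{\x}$ is $v:=\lambda f_1(\overline{\x})$. If some $\x\in\S$ satisfies $\max\{\lambda f_1(\x),(1-\lambda)f_2(\x)\}\leq v$, then $f_1(\x)\leq f_1(\overline{\x})$ and $f_2(\x)\leq f_2(\overline{\x})$, and EP optimality of $\overline{\x}$ yields $f(\x)=f(\overline{\x})$. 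This shows $\overline{\x}$ is optimal for $\Plam^{\infty}$ and that its image is unique among optimizers.

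The main technical subtlety, and essentially the only nontrivial point, is producing a $\lambda$ that is strictly interior to $(0,1)$: if either $f_1(\overline{\x})$ or $f_2(\overline{\x})$ could vanish one would be forced to $\lambda\in\{0,1\}$ and the Chebyshev scalarization would collapse to a single-criterion problem, losing the image uniqueness needed to separate EP from weakly EP points. The positivity hypothesis is precisely what removes this degeneracy, and the argument can be seen as a direct specialization of \cite[Corollary~11.21~(a)]{jahn:2010:vector} to the bicriteria polynomial setting.
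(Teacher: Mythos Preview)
The paper does not give a self-contained argument here; it simply records the lemma as a consequence of \cite[Corollary~11.21]{jahn:2010:vector}. Your proposal spells out the Chebyshev-scalarization argument behind that citation, and the reasoning you give is sound --- but for a \emph{different} formulation of the lemma than the one stated.

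The statement you were asked to prove concerns the \emph{weakly} EP points (the paper defines the Pareto curve as the image of the weakly EP set), with $\lambda$ ranging over the closed interval $[0,1]$ and $\overline{\x}$ merely required to be \emph{an} optimal solution of $\Plam^{\infty}$. Your argument instead establishes the sharper equivalence involving \emph{strict} EP points, $\lambda\in(0,1)$, and \emph{image-unique} optimality. This is in fact the alternate version of the lemma that the paper also carries, tied to part~(a) rather than part~(b) of Jahn's corollary; you have proved that variant correctly, but not the one in front of you.

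Two small adaptations close the gap. In the ``if'' direction, drop the image-uniqueness hypothesis and the restriction $\lambda\in(0,1)$ and aim only for weak EP: if some $\x\in\S$ satisfied $f_j(\x)<f_j(\overline{\x})$ for $j=1,2$, then for $\lambda\in(0,1)$ the Chebyshev value would strictly decrease, while for $\lambda\in\{0,1\}$ positivity of $f_1,f_2$ reduces the objective to a single $f_j$ and the same contradiction arises. In the ``only if'' direction you assume strict EP, but the statement only hands you weak EP; with your balancing weight $\lambda=f_2(\overline{\x})/(f_1(\overline{\x})+f_2(\overline{\x}))$, any $\x$ with $\max\{\lambda f_1(\x),(1-\lambda)f_2(\x)\}<v$ would strictly dominate $\overline{\x}$ in both coordinates, so optimality (though not image uniqueness) already follows from weak EP alone.
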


\subsubsection{ Method (c): parametric sublevel set approximation }
\label{subsubsec:L1}
Here, we use an alternative method inspired by~\cite{Gorissen2012319}. Problem~$\P$ can be approximated using the criterion $f_2$ as the objective function and the constraint set
\[
\K^u := \{(\lambda, \x) \in [0, 1] \times \S : (f_1 (\x) - a_1) / (b_1 - a_1) \leq \lambda\},
\] 
 which leads to the parametric POP $\Plam^u : f^u (\lambda) = \min_{\x} \{f_2 (\x) :  (\lambda, \x) \in \K^u\}$
which is consistent with (\ref{eq:flamx-u}), and such that $f^u(\lambda)=f^*_2(\lambda)$ for all $\lambda \in [0,1]$, 
with $a_1$ and $b_1$ as in~\eqref{eq:interval1}.
\begin{lemma}
\label{th:f2u}
Suppose that  $\overline{\x} \in \S$ is an optimal  solution of Problem~$\Plam^u$, with $\lambda\in[0,1]$. Then $\overline{\x}$
belongs to the Pareto curve of Problem~$\P$.
\end{lemma}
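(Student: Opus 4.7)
The plan is a direct proof by contradiction, exploiting the fact that the sublevel constraint defining $\K^u$ is monotone in $f_1$. Concretely, I would assume $\overline{\x}\in\S$ is an optimal solution of $\Plam^u$ for some $\lambda\in[0,1]$, but that $\overline{\x}$ fails to be a weakly EP point. By the definition of weakly EP, this failure means there exists $\x\in\S$ with $f_1(\x)<f_1(\overline{\x})$ and $f_2(\x)<f_2(\overline{\x})$ simultaneously. The goal is then to feed this $\x$ back into $\Plam^u$ to contradict optimality of $\overline{\x}$.

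The key step is to verify that $\x$ is feasible for $\Plam^u$ at the same parameter value $\lambda$. Since $\overline{\x}$ is feasible, it satisfies $(f_1(\overline{\x})-a_1)/(b_1-a_1)\leq \lambda$. Combining with $f_1(\x)<f_1(\overline{\x})$ yields the strict inequality
\[
\frac{f_1(\x)-a_1}{b_1-a_1} \;<\; \frac{f_1(\overline{\x})-a_1}{b_1-a_1} \;\leq\; \lambda,
\]
and because $\x\in\S$ is assumed, we conclude $(\lambda,\x)\in\K^u$.

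Once feasibility is in hand, the final step is immediate: optimality of $\overline{\x}$ for $\Plam^u$ forces $f_2(\overline{\x})\leq f_2(\x)$, which directly contradicts the hypothesis $f_2(\x)<f_2(\overline{\x})$. Hence no such dominating $\x$ can exist, i.e.\ $\overline{\x}$ is a weakly EP point.

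I do not foresee a real obstacle in this argument; it is a one-line contradiction whose only subtle point is bookkeeping of which inequality is strict and which is weak. The normalization $(f_1(\x)-a_1)/(b_1-a_1)$ versus $f_1(\x)\leq\lambda$ is harmless because $b_1-a_1\geq 0$, so the monotonicity used above is preserved. Note also that the proof does not need uniqueness of $\overline{\x}$ nor positivity of the criteria, which is why one only obtains the weak notion of EP optimality here; to promote it to full EP optimality, one would need an additional hypothesis such as uniqueness of $\xlamstar$ (as remarked immediately after the lemma).
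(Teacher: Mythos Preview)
Your proof is correct and follows essentially the same contradiction argument as the paper: assume a point $\x\in\S$ strictly dominates $\overline{\x}$ in both criteria, check that $\x$ is feasible for $\Plam^u$ via the sublevel constraint, and contradict optimality of $\overline{\x}$ with respect to $f_2$. The only minor remark is that the normalization step implicitly requires $b_1-a_1>0$ (not merely $\geq 0$), but this is a standing nondegeneracy assumption of the setup rather than a gap in your reasoning.
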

\begin{proof}
\label{pr:f2u}
Suppose that there exists $\x \in \S$ such that $f_1(\x) < f_1 (\overline{\x})$ and $f_2(\x) < f_2 (\overline{\x})$. Then $\x$ is feasible for Problem~$\Plam^u$ (since $(f_1(\x) - a_1) / (b_1 - a_1) \leq \lambda$) and $f_2 (\overline{\x}) \leq f_2 (\x)$, which leads to a contradiction.
\end{proof}
\if{
\begin{remark}
\label{rk:f2u}
Solving Problem~$\P_u$, $u \in [a_1, b_1]$ does not allow to recover all the weakly EP optimal points. Indeed, ...
\end{remark}
}\fi

\subsection{A Hierarchy of Semidefinite Relaxations}
\label{subsec:sdp}
Notice that the three problems~$\Plam^{1}$, $\Plam^{\infty}$ and $\Plam^u$ are particular instances of the generic parametric optimization problem $f^*(y) := \min_{(y, \x) \in \K} f(y, \x)$. The feasible set $\K$ (resp.~the objective function $f^*$) corresponds to $\K^{1}$ (resp.~$f^{1}$) for Problem~$\Plam^{1}$, $\K^{\infty}$ (resp.~$f^{\infty}$) for Problem~$\Plam^{\infty}$ and $\K^u$ (resp.~$f^u$) for Problem~$\Plam^u$.

We write $\K := \{ (y, \x) \in \R^{n' + 1} : p_1(y, \x) \geq 0, \dots,  p_{m'}(y, \x) \geq 0 \}$. Note also that $n' = n$ (resp.~$n' = n + 1$) when considering Problem~$\Plam^{1}$ and Problem~$\Plam^u$ (resp.~Problem~$\Plam^{\infty}$).

Let $\Mcal(\K)$ be the space of probability measures supported on $\K$.
The function $f^*$ is well-defined because $f$ is a polynomial and $\K$ is compact.
Let $\a=(a_k)_{k \in \N}$, with $a_k=1/(k+1)$, $\forall k \in \N$ and consider the optimization problem:
\begin{equation}
\label{eq:infsdp}
\Pcal:\quad \left\{
\begin{array}{rll}
\rho := \displaystyle\min_{\mu \in \Mcal (\K)} &\displaystyle\int_{\K} f(y,\x) \, d\mu(y,\x)\\
\mbox{s.t.}&\displaystyle\int_{\K} y^k\,d\mu(y,\x) = a_k,\quad k \in \N \: . \\
\end{array} \right.
\end{equation}
\begin{lemma}
\label{th:1}
The optimization problem $\Pcal$ has an optimal solution $\mu^*\in \Mcal(\K)$ and
if $\rho$ is as in~\eqref{eq:infsdp} then 
\begin{equation}
\label{eq:lem1-1}
\rho = \int_{\K}  f(y,\x) \,d\mu^*\,=\, \displaystyle\int_0^1 f^* (y) \, d y \: .
\end{equation}
Moreover, suppose that for almost all $y \in [0,1]$, the parametric optimization problem 
$f^*(y)=\min_{(y, \x) \in \K} f(y, \x)$ has a unique global minimizer $\x^*(y)$ and let $f_j^* : [0,1] \to \R$ be the function $y \mapsto f_j^*(y):=f_j(\x^*(y))$, $j=1,2$. Then for Problem~$\Plam^{1}$,
\begin{equation}
\label{eq:lem1-2}
\rho  = 
\displaystyle\int_0^1 \lambda f_1^*(\lambda) + (1 - \lambda) f_2^*(\lambda) \, d \lambda \: ,
\end{equation}
for Problem~$\Plam^{\infty}$,
\begin{equation}
\label{eq:lem1-3}
\rho = 
\displaystyle\int_0^1 \max \{\lambda f_1^*(\lambda), (1 - \lambda) f_2^*(\lambda) \}\, d \lambda \: ,
\end{equation}
and for Problem~$\Plam^u$,
\begin{equation}
\label{eq:lem1-4}
\rho = 
\displaystyle\int_0^1 f_2^*(\lambda) \, d\lambda \: .
\end{equation}
\end{lemma}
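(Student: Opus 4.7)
The plan is to reduce the proof to standard results on parametric optimization with moment constraints, observing first that the moment constraints in $\Pcal$ force the $y$-marginal of any feasible $\mu$ to coincide with Lebesgue measure on $[0,1]$, and then disintegrating over this marginal. More precisely, since $\K$ is compact, the projection of $\K$ on the $y$-axis is a compact subset of $\R$, and the Hausdorff moment problem on a compact interval is determinate. The constraints $\int y^k d\mu = 1/(k+1) = \int_0^1 y^k dy$ for all $k \in \N$ therefore uniquely determine the $y$-marginal of $\mu$ to be $dy$ restricted to $[0,1]$. Disintegration gives $d\mu(y,\x) = dy \otimes d\mu_y(\x)$, where each $\mu_y$ is a probability measure supported on the fiber $\K_y := \{\x : (y,\x) \in \K\}$.

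Next I would use the pointwise bound $\int_{\K_y} f(y,\x) d\mu_y(\x) \geq f^*(y)$, valid for every $y \in [0,1]$, and integrate to conclude $\int_\K f d\mu \geq \int_0^1 f^*(y) dy$. For the matching upper bound, I would invoke a measurable selection theorem (Kuratowski--Ryll-Nardzewski, whose hypotheses hold here because $f$ is continuous and the set-valued map $y \mapsto \arg\min_{\x \in \K_y} f(y,\x)$ is nonempty and compact-valued by compactness of $\K$) to produce a measurable map $y \mapsto \x^*(y)$ satisfying $f(y,\x^*(y)) = f^*(y)$ for a.a.~$y$. The measure $d\mu^*(y,\x) := dy \cdot \delta_{\x^*(y)}(d\x)$ is then feasible for $\Pcal$ and attains $\int_\K f d\mu^* = \int_0^1 f^*(y) dy$, proving existence of the optimum and equation~\eqref{eq:lem1-1}. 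All of this is packaged in~\cite[Theorem 2.2]{Lasserre:2010:JMA}, which I would cite as the source.

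For the second part, I would specialize $f^*(y)$ to each of the three problems under the a.e.\ uniqueness hypothesis. Under uniqueness, for a.a.~$\lambda \in [0,1]$ the argmin reduces to $\{\x^*(\lambda)\}$, so $f^*(\lambda) = f(\lambda,\x^*(\lambda))$. Substituting the expression for $f$ in each case:
\begin{itemize}
\item For $\Plam^{1}$, $f^*(\lambda) = \lambda f_1(\x^*(\lambda)) + (1-\lambda) f_2(\x^*(\lambda)) = \lambda f_1^*(\lambda) + (1-\lambda) f_2^*(\lambda)$, yielding the first formula.
\item For $\Plam^{\infty}$, minimizing $\omega$ under the two inequality constraints gives $\omega^*(\lambda) = \max\{\lambda f_1^*(\lambda),(1-\lambda)f_2^*(\lambda)\}$ (up to the scaling constant $C$ introduced in the reformulation), yielding the second formula.
\item For $\Plam^{u}$, by construction $f = f_2$ so $f^*(\lambda) = f_2^*(\lambda)$, giving the third formula.
\end{itemize}

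The main obstacle is really the first part: producing a feasible $\mu^*$ attaining the bound requires the measurable selection argument together with uniqueness of the $y$-marginal. The uniqueness of the marginal is clean once one recognizes determinacy of the moment problem on a compact interval; the selection/existence step is the delicate ingredient, but it is already available as~\cite[Theorem 2.2]{Lasserre:2010:JMA}, so the proof reduces to a direct application of that theorem followed by routine case-by-case evaluation of $f^*$.
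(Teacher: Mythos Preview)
Your proposal is correct and follows essentially the same route as the paper: both reduce \eqref{eq:lem1-1} directly to \cite[Theorem 2.2]{Lasserre:2010:JMA} and then obtain \eqref{eq:lem1-2}--\eqref{eq:lem1-4} by evaluating $f^*(\lambda)$ case by case under the a.e.\ uniqueness assumption. Your write-up is somewhat more explicit than the paper's (you unpack the marginal-determinacy, disintegration, and measurable-selection ingredients behind that theorem, and you flag the scaling constant $C$ in the $\Plam^{\infty}$ reformulation), but the underlying argument is identical.
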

\begin{proof}
The proof of~\eqref{eq:lem1-1} follows from~\cite[Theorem 2.2]{Lasserre:2010:JMA} with $y$ in lieu of $\y$. 
Now, consider the particular case of Problem~$\Plam^{1}$. If $\Plam^{1}$
has a unique optimal solution $\x^*(\lambda) \in \S$ for almost all $\lambda \in [0,1]$ then
$f^*(\lambda) = \lambda f_1^*(\lambda) + (1 - \lambda)f_2^*(\lambda)$ for almost all
$\lambda \in [0,1]$, and so~\eqref{eq:lem1-2} follows. The proofs of~\eqref{eq:lem1-3} and~\eqref{eq:lem1-4} are similar.  
\end{proof}

We set $p_0 := 1$,  $v_l := \lceil \deg p_l  / 2 \rceil$, $l=0,\dots,m'$ and $d_0 := \max (\lceil d_1  / 2 \rceil, \lceil d_2  / 2\rceil, v_1, \dots, v_{m'})$.
Then, consider the following semidefinite relaxations for $d \geq d_0$:
\begin{equation}
\label{eq:primalsdp}
\left\{\begin{array}{rl}
\min_\z &L_\z(f)\\
\mbox{s.t.}&\M_d(\z)\succeq0,\M_{d-v_l}(p_l\,\z)\succeq0, \: l=1,\ldots,m' \: ,\\
&L_\z(y^k) =a_k, \quad k=0,\dots, 2 d \: .
\end{array}\right.
\end{equation}
\begin{lemma}
\label{lem2}
Assume that for almost all $y \in [0,1]$, the parametric optimization problem $f^*(y)=\min_{(y, \x) \in \K} f(y, \x)$ a unique global minimizer $\x^*(y)$, and let
$\z^d=(z^d_{k\alpha})$, $(k,\alpha)\in\N^{n+1}_{2d}$, be an optimal solution of~\eqref{eq:primalsdp}.
Then
\begin{equation}
\label{eq:lem2-1}
\lim_{d\to\infty}z^d_{k\alpha}= \int_0^1 y^k\,(\x^*(y))^\alpha d y \: .
\end{equation}
In particular, for $s \in \N$,
\begin{equation}
\label{eq:lem2-2}
m^k_j:=\lim_{d\to\infty}\sum_{\alpha}f_{j\alpha}z^d_{k\alpha}=\int_0^1 y^k\,f^*_j(y)\,d y \:,  \quad k=0, \dots,s \:, \quad j=1,2 \: .
\end{equation}
\end{lemma}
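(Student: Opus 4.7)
The plan is to recognize that \eqref{eq:primalsdp} is precisely the standard order-$d$ moment relaxation of the infinite-dimensional problem $\Pcal$ in \eqref{eq:infsdp}, so that the general convergence theory of the moment--SOS hierarchy for parametric polynomial optimization (\cite[Theorem 3.3]{Lasserre:2010:JMA}) can be applied essentially off the shelf. The statement \eqref{eq:lem2-2} will then fall out of \eqref{eq:lem2-1} by linearity, so the real content is \eqref{eq:lem2-1}.

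First, I would invoke Lemma~\ref{th:1} to fix an optimal measure $\mu^*\in\Mcal(\K)$ for $\Pcal$, whose marginal on the $y$-coordinate equals the Lebesgue measure on $[0,1]$. Under the hypothesis that $f^*(y)$ is attained at a unique minimizer $\x^*(y)$ for a.a.~$y\in[0,1]$, the conditional distribution of $\mu^*$ given $y$ must be $\delta_{\x^*(y)}$, so $\mu^*$ disintegrates as $d\mu^*(y,\x)=\delta_{\x^*(y)}(d\x)\,dy$. In particular, the true moments of $\mu^*$ are
\[
\int_{\K} y^k \x^\alpha \, d\mu^*(y,\x) \,=\, \int_0^1 y^k\,(\x^*(y))^\alpha\, dy, \qquad (k,\alpha)\in\N^{n+1}.
\]
Next, I would apply \cite[Theorem 3.3]{Lasserre:2010:JMA} to conclude that any optimal solution $\z^d$ of \eqref{eq:primalsdp} satisfies $z^d_{k\alpha}\to \int_{\K} y^k\x^\alpha\,d\mu^*(y,\x)$ as $d\to\infty$, which combined with the above disintegration yields \eqref{eq:lem2-1}. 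Finally, since $f_j$ is a polynomial with finitely many nonzero coefficients $f_{j\alpha}$, linearity of the limit gives
\[
\lim_{d\to\infty}\sum_{\alpha} f_{j\alpha}\,z^d_{k\alpha} \,=\,\sum_\alpha f_{j\alpha}\int_0^1 y^k\,(\x^*(y))^\alpha\, dy \,=\, \int_0^1 y^k\,f_j(\x^*(y))\,dy \,=\, \int_0^1 y^k\,f^*_j(y)\,dy,
\]
which is \eqref{eq:lem2-2}.

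The main obstacle, conceptually, is not the algebra but the fact that one needs convergence of the individual moments $z^d_{k\alpha}$ rather than just of the optimal value $L_{\z^d}(f)\to\rho$. This stronger conclusion requires that the optimal measure $\mu^*$ of $\Pcal$ be unique; a.a.~uniqueness of $\x^*(y)$ is exactly what forces $\mu^*$ to be the graph measure above, ruling out alternative optimal measures whose mixed moments could disagree. This is precisely the role played by the hypothesis in the lemma, and it is also the hypothesis under which \cite[Theorem 3.3]{Lasserre:2010:JMA} is stated; once it is in place, the rest of the argument is a routine combination of disintegration and linearity.
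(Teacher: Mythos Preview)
Your proposal is correct and follows essentially the same route as the paper: invoke \cite[Theorem~3.3]{Lasserre:2010:JMA} to obtain $z^d_{k\alpha}\to\int_\K y^k\x^\alpha\,d\mu^*$, identify $\mu^*$ with the graph measure $\delta_{\x^*(y)}(d\x)\,dy$ via the a.a.\ uniqueness hypothesis, and then deduce \eqref{eq:lem2-2} by linearity. Your write-up is in fact a bit more explicit than the paper's about the disintegration step and the role of the uniqueness assumption, but the argument is the same.
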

\begin{proof}
Let $\mu^*\in \Mcal(\K)$ be an optimal solution of problem $\Pcal$. From
\cite[Theorem 3.3]{Lasserre:2010:JMA},
\[\lim_{d\to\infty}z^d_{k\alpha} = \int_{\K} y^k\x^\alpha\,d\mu^*(y,\x) = \int_0^1 y^k \,(\x^*(y))^\alpha d y \: ,\]
which is~\eqref{eq:lem2-1}. Next, from (\ref{eq:lem2-1}), one has for $s \in \N$:
\begin{eqnarray*}
\lim_{d\to\infty}\sum_{\alpha}f_{j\alpha}z^d_{k\alpha}&=&\int_{\K}y^k f_j(\x) \, d\mu^*(y,\x)\\
&=&\int_0^1 y^k\,f_j(\x^*(y))\,d y
\,=\,\int_0^1 y^k\,f^*_j(y)\,d y \:, \quad k=0, \dots, s \:, \quad j=1,2 \:,
\end{eqnarray*}
which is~\eqref{eq:lem2-2}.
\end{proof}
The dual of the SDP~\eqref{eq:primalsdp} reads:

\begin{equation}
\label{eq:dualsdp}
\left\{\begin{array}{rl}
\rho_d^* := & \displaystyle\max_{q, (\sigma_l)}  \displaystyle\int_{0}^1 q (y) \, d y \: (= \displaystyle\sum_{k=0}^{2 d} q_k \, a_k) \:,\\
\mbox{s.t.} & f (y, \x) -q(y)= \sum_{k=0}^{m'} \sigma_l(y, \x) \, p_l (y, \x) \:, \quad \forall y \:, \forall \x  \:,\\
& q \in \R[y]_{2d} , \sigma_l \in \Sigma[y,\x]_{d - v_l}, \: l=0,\ldots,m' \: .\\
\end{array}\right.
\end{equation}

\begin{lemma}
\label{th:dual}
Consider the dual semidefinite relaxations defined in~\eqref{eq:dualsdp}. Then, one has:
\begin{enumerate}[noitemsep,topsep=0pt,label={(\roman*)}]
\item $\rho_d \uparrow \rho$ as $d \to \infty$.
\item Let $q_{2 d}$ be a nearly optimal solution of~\eqref{eq:dualsdp}, i.e., such that 
$\int_0^1q_{2d}(y)dy\geq\rho^*_d-1/d$. Then $q_{2d}$ underestimates $f^*$ over $\S$ and
\[ \lim_{d \to \infty} \int_0^1 | f^* (y) - q_{2 d} (y) | d \mu  \: .\]
\end{enumerate}
\end{lemma}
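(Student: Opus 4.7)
The plan is to derive both statements from the standard Lasserre-hierarchy machinery, exploiting weak duality between~\eqref{eq:primalsdp} and~\eqref{eq:dualsdp} together with a Putinar-type representation on $\K$. Note that the Archimedean condition on the quadratic module generated by the $p_l$'s is guaranteed by the ball constraint added at the end of Section~\ref{sec:prelim}.

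\textbf{Step 1 (monotonicity and weak duality).} Observe that the feasible set of~\eqref{eq:dualsdp} is nested in $d$ (any triple $(q,\sigma_l)$ at order $d$ extends, after padding by zero, to order $d+1$), so $\rho_d^*$ is nondecreasing in $d$. Weak duality between \eqref{eq:primalsdp} and \eqref{eq:dualsdp} gives $\rho_d^* \le L_\z(f)$ for every feasible $\z$, hence $\rho_d^* \le \rho_d$, where $\rho_d$ is the optimum of~\eqref{eq:primalsdp}. Since~\eqref{eq:primalsdp} is itself a relaxation of the moment reformulation~$\Pcal$ in~\eqref{eq:infsdp}, one has $\rho_d \le \rho$, and by Lemma~\ref{th:1} $\rho_d \le \rho = \int_0^1 f^*(y)\,dy$.

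\textbf{Step 2 (convergence $\rho_d^* \uparrow \rho$).} Fix $\varepsilon > 0$ and let $q \in \R[y]$ be any univariate polynomial strictly dominated by $f^*$, that is $q(y) < f^*(y) - \varepsilon$ for all $y\in[0,1]$. Then for every $(y,\x)\in\K$, $f(y,\x) - q(y) \ge f^*(y) - q(y) > \varepsilon > 0$. By Putinar's Positivstellensatz applied on $\K$ (Archimedean), there exist SOS polynomials $\sigma_0,\dots,\sigma_{m'}$ with $f(y,\x) - q(y) = \sum_{l=0}^{m'} \sigma_l(y,\x)\,p_l(y,\x)$. For $d$ large enough the degree bounds of \eqref{eq:dualsdp} are satisfied, so $(q,\sigma_l)$ is dual feasible at level $d$, giving $\rho_d^* \ge \int_0^1 q(y)\,dy$. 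Choosing polynomial approximations $q$ of $f^*-\varepsilon$ that are arbitrarily close in $L^1([0,1])$ and letting $\varepsilon\to 0$ shows $\liminf_d \rho_d^* \ge \rho$, combined with the upper bound of Step 1 this yields $\rho_d^* \uparrow \rho$, which is~(i).

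\textbf{Step 3 (pointwise underestimation).} Any feasible $q_{2d}$ of~\eqref{eq:dualsdp} satisfies $f(y,\x) - q_{2d}(y) = \sum_{l=0}^{m'}\sigma_l(y,\x)p_l(y,\x) \ge 0$ for all $(y,\x)\in\K$, since each $\sigma_l$ is SOS and $p_l\ge 0$ on $\K$. Taking the infimum over $\x$ with $(y,\x)\in\K$ gives $q_{2d}(y) \le f^*(y)$ for every $y\in[0,1]$.

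\textbf{Step 4 ($L^1$-convergence).} Since $q_{2d}\le f^*$ on $[0,1]$, we have
\begin{equation*}
\int_0^1 |f^*(y) - q_{2d}(y)|\,dy \;=\; \int_0^1 f^*(y)\,dy \;-\; \int_0^1 q_{2d}(y)\,dy \;=\; \rho \;-\; \int_0^1 q_{2d}(y)\,dy.
\end{equation*}
The near-optimality assumption $\int_0^1 q_{2d}(y)\,dy \ge \rho_d^* - 1/d$ together with~(i) yields $\int_0^1 q_{2d}(y)\,dy \to \rho$, hence $\int_0^1 |f^*(y) - q_{2d}(y)|\,dy \to 0$, establishing the $L^1$-convergence claim of~(ii).

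The main obstacle is Step~2: producing a dual-feasible triple whose objective approaches $\rho$ requires Putinar's Positivstellensatz, which crucially depends on the Archimedean hypothesis on the quadratic module defining $\K$; this is why the ball redundancy $g_{m+1}\ge 0$ was enforced in Section~\ref{sec:prelim}. Everything else reduces to weak duality, SOS nonnegativity on $\K$, and the $L^1$ identity above.
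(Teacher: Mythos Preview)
Your proof is correct. The paper's own proof is a one-line citation to \cite[Theorem~3.5]{Lasserre:2010:JMA}; what you have written is essentially a self-contained reconstruction of that theorem's argument in the present setting (weak duality plus Putinar for convergence, feasibility of the Putinar certificate for the pointwise underestimate, and the $L^1$ identity $\int_0^1|f^*-q_{2d}|=\rho-\int_0^1 q_{2d}$). So the route is the same, just unpacked rather than delegated.

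One small imprecision in Step~2 is worth tightening: to obtain $\liminf_d\rho_d^*\ge\rho$ you need polynomials $q$ that simultaneously satisfy the strict pointwise bound $q(y)<f^*(y)$ on $[0,1]$ and have $\int_0^1 q$ arbitrarily close to $\int_0^1 f^*$. Saying ``approximate $f^*-\varepsilon$ in $L^1$'' does not by itself deliver the pointwise inequality. The standard fix is to use that $f^*$ is lower semicontinuous on $[0,1]$ (indeed continuous for $\Plam^1$, since the fiber is constant; and at least l.s.c.\ in the other cases because $\K$ is compact and $f$ continuous), take a continuous minorant $g\le f^*$ with $\int(f^*-g)$ small, and then uniformly approximate $g-\varepsilon$ by a polynomial $q$ via Weierstrass so that $q<g\le f^*$ everywhere while $\int q$ is within $O(\varepsilon)$ of $\int f^*$. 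With this adjustment your Step~2 is airtight.
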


\begin{proof}
\label{pr:dual}
It follows from~\cite[Theorem 3.5]{Lasserre:2010:JMA}.
\end{proof}

Note that one can directly approximate the Pareto front from below when considering Problem~$\Plam^u$. Indeed, solving the dual SDP~\eqref{eq:dualsdp} yields polynomials that underestimate the function $\lambda \mapsto f_2^*(\lambda)$ over $[0, 1]$.
\begin{remark}
\label{rk:hertog}
In~\cite[Appendix A]{Gorissen2012319}, the authors derive the following relaxation from Problem~$\Plam^u$:
\begin{equation}
\label{eq:dualsdphertog}
\left\{\begin{array}{rl}
 \displaystyle\max_{q \in \R[y]_{d}}  & \displaystyle\int_{0}^1 q (\lambda) \, d\lambda\:,\\
\mbox{s.t.} & f_2 (\x) \geq q(  \frac{f_1(\x) - a_1}{b_1 - a_1} ) \:, \forall \x \in \S \:.\\
\end{array}\right.
\end{equation}
Observe that the set of Pareto points is nonincreasing. Therefore
suppose that in (\ref{eq:dualsdphertog}  
one also imposes that $q$ is nondecreasing. For even degree approximations, the formulation~\eqref{eq:dualsdphertog} is equivalent to
\begin{equation}
\label{eq:dualsdphertogeq}
\left\{\begin{array}{rl}
\displaystyle\max_{q \in \R[y]_{2 d}}  & \displaystyle\int_{0}^1 q (\lambda) \, d\lambda \:,\\
\mbox{s.t.} & f_2 (\lambda) \geq q(\lambda)  \:,  (f_1 (\x) - a_1) / (b_1 - a_1) \leq \lambda \:,\\
& \forall \lambda \in [0, 1]  \:, \forall \x \in \S  \:.
\end{array}\right.
\end{equation}
Thus, our framework is related to~\cite{Gorissen2012319} by observing that~\eqref{eq:dualsdp} is a strengthening of~\eqref{eq:dualsdphertogeq}.
\end{remark}


When using the reformulations $\Plam^{1}$ and $\Plam^{\infty}$, computing the Pareto curve is computing (or at least providing good approximations)
of the functions $f^*_j:[0,1]\to\R$ defined above, and
we consider this problem as an {\it inverse problem from generalized moments}.

$\bullet$ For any fixed $s \in \N$, we first compute approximations $\m^{s d}_j = (m_j^{k d})$, $k = 0, \dots, s$, $d \in \N$, of the generalized moments
\begin{equation}
\label{eq:mom}
m^k_j\,\:=\,\int_0^1\lambda^k\,\underbrace{f^*_j(\lambda)\,d\lambda}_{d\mu_j(\lambda)} \:, \quad k=0,\dots, s \:, \quad j=1,2 \:,
\end{equation}
with the convergence property $(\m^{s d}_j)  \to \m^s _j$ as $d\to\infty$, for each $j=1,2$.

$\bullet$ Then we solve the inverse problem: given a (good) approximation  $(\m^{s d}_j)$ of $ \m^s _j$,
find a polynomial $h_{s, j}$ of degree at most $s$ such that 
\[ m_j^{k d} = \int_0^1\lambda^k\,h_{s, j}(\lambda)\,d\lambda \:, \quad k=0,\dots, s \:, \quad j=1,2 \:, \]
Importantly, if  $(\m^{s d}_j) = (\m^{s}_j)$ then $h_{s, j}$ minimizes the $L_2$-norm
$\int_0^1(h(\lambda)-f^*_j(\lambda))^2d\lambda$ (see Appendix~\ref{sec:inverse} for more details).
%


\paragraph*{Computational considerations.}
The presented parametric optimization methodology
has a high computational cost mainly due to the size of SDP relaxations (\ref{eq:primalsdp}) and the state-of-the-art
for SDP solvers. Indeed, when the relaxation order $d$ is fixed, the size of the SDP matrices involved in~\eqref{eq:primalsdp} grows like $O((n + 1)^d)$ for Problem~$\Plam^{1}$ and like $O((n + 2)^d)$ for problems $\Plam^{\infty}$ and $\Plam^u$. By comparison, when using a discretization scheme, one has to solve $N$ polynomial optimization problems, each one being solved by programs whose SDP matrix size grows like $O(n^d)$. Section~\ref{sec:bench} compares both methods.

Therefore these techniques are of course limited to problems of modest size involving a small or medium number of variables $n$. 
We have been able to handle non convex problems with about $15$ variables. However when a correlative sparsity pattern is present then one may benefit from a sparse variant of the SDP relaxations for parametric POP which permits to handle problems of much larger size (e.g. with more than $500$ variables);
see e.g.~\cite{Waki06sumsof, Lasserre_convergentsdp-relaxations} for more details.

}\fi
\section{Numerical Experiments}
\label{sec:bench}
The semidefinite relaxations of problems~$\Plam^{1}$, $\Plam^{\infty}$ and $\Plam^u$ have been implemented in MATLAB, using the Gloptipoly software package~\cite{henrion:hal-00172442}, on an Intel Core i5 CPU ($2.40\, $GHz). 

\subsection{Case 1: \texorpdfstring{$f(\S) + \R_+^2$}{} is convex}
We have considered the following test problem mentioned in~\cite[Example 11.8]{jahn:2010:vector}:
\begin{example}
\label{ex:ex11_4}
Let
\begin{align*}
& g_1 := - x_1^2 + x_2 \:, & & f_1 := -x_1 \:,\\
& g_2 := - x_1 - 2 x_2 + 3 \:,  & & f_2 := x_1 + x_2^2 \:.\\ 
& \S := \{\x \in \R^2 : g_1(\x) \geq 0, g_2(\x) \geq 0 \} \: . & &
\end{align*}
Figure~\ref{fig:SfScvx} displays the discretization of the feasible set $\S$ as well as the image set $f(\S)$.  The weighted sum approximation method (a) being suitable when the set $f(\S) + \R_+^2$ is convex, one reformulates the problem as a particular instance of Problem~$\Plam^{1}$.
\begin{figure}[!ht]
\centering
\subfigure[$\S$]{
\includegraphics[scale=\sizefig]{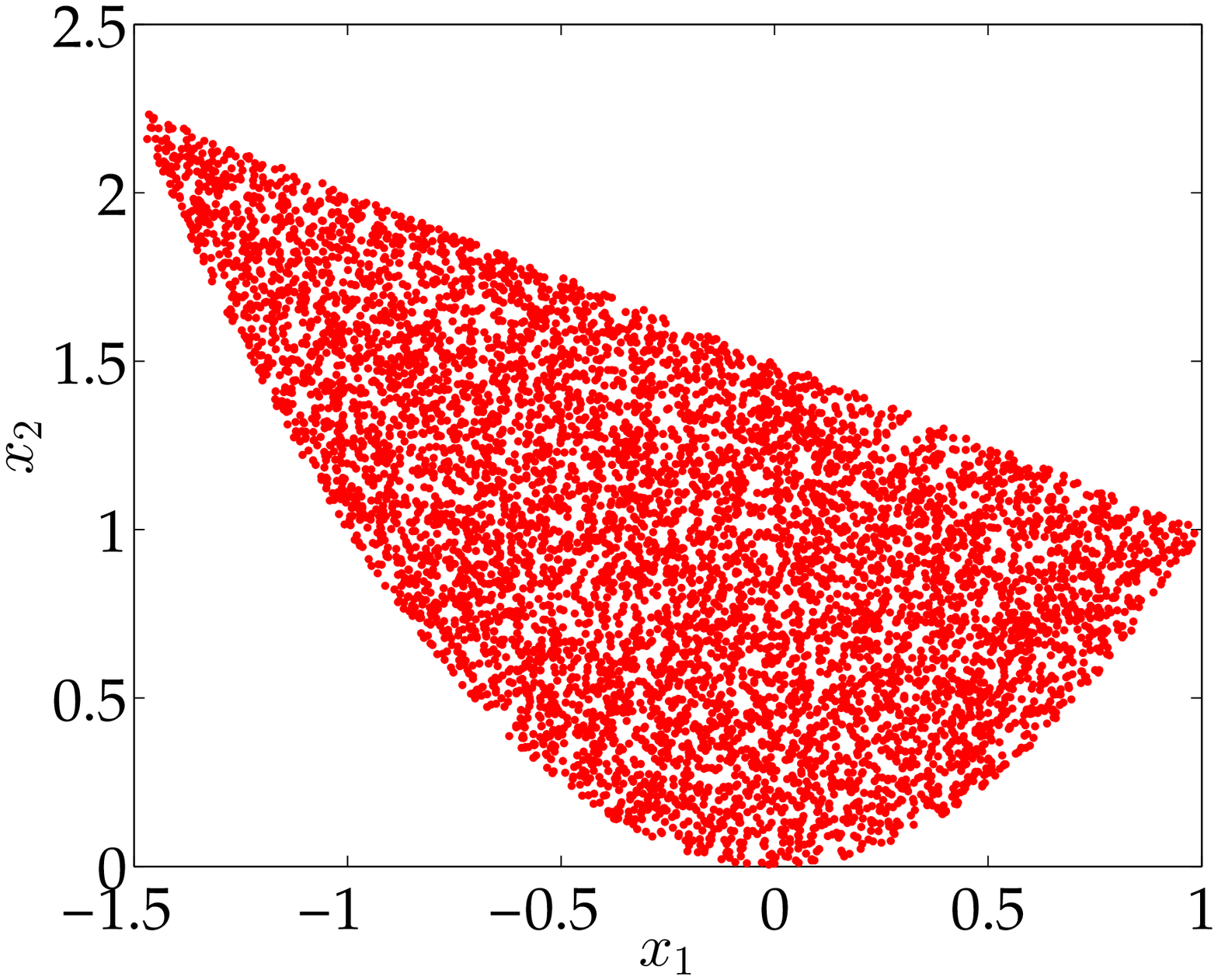}}
\hspace{1cm}
 \subfigure[$f(\S)$]{
\includegraphics[scale=\sizefig]{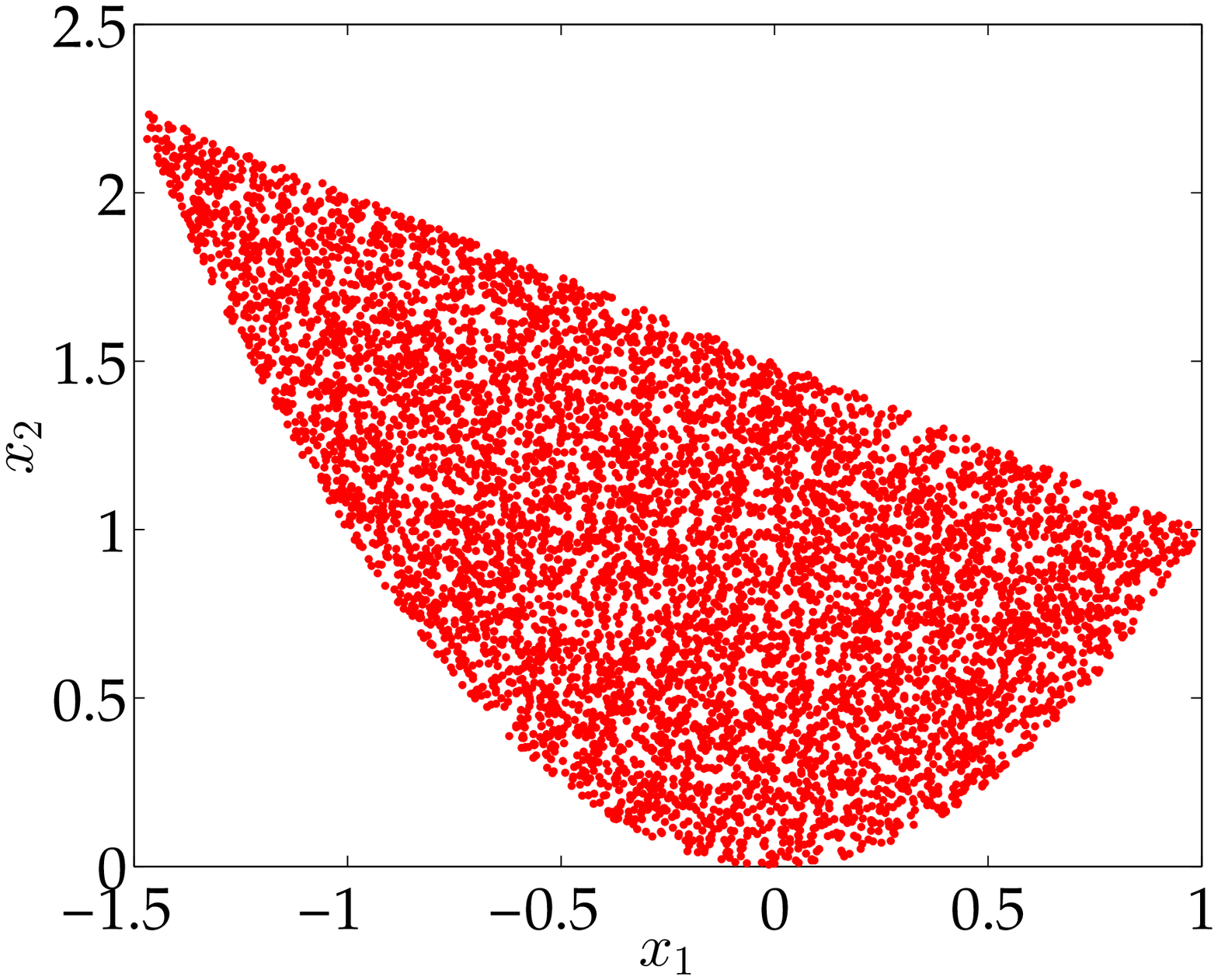}}
 \caption{Preimage and image set of $f$ for Example~\ref{ex:ex11_4}}	\label{fig:SfScvx}
\end{figure}

For comparison, we fix discretization points $\lambda_1, \dots, \lambda_N$ uniformly distributed on the interval $[0, 1]$ (in our experiments, we set $N = 100$).
Then for each $\lambda_i, i = 1, \dots, N$, we compute the optimal value $f^*(\lambda_i)$ of the polynomial optimization problem $\P_{\lambda_i}^{1}$. The dotted curves from Figure~\ref{fig:undercvx} display the results of this discretization scheme. From the optimal solution of the dual SDP~\eqref{eq:dualsdp}  corresponding to our method (a), namely weighted convex sum approximation,  one obtains the degree 4 polynomial $q_4$ (resp. degree 6 polynomial $q_6$) with moments up to order 8 (resp.~12), displayed on Figure~\ref{fig:undercvx}~(a) (resp.~(b)). One observes that $q_4 \leq f^*$ and $q_6 \leq f^*$, which illustrates Lemma~\ref{th:dual} (ii). The higher relaxation order also provides a tighter underestimator, as expected.
\begin{figure}[!ht]
\centering
\subfigure[Degree 4 underestimator]{
\includegraphics[scale=\sizefig]{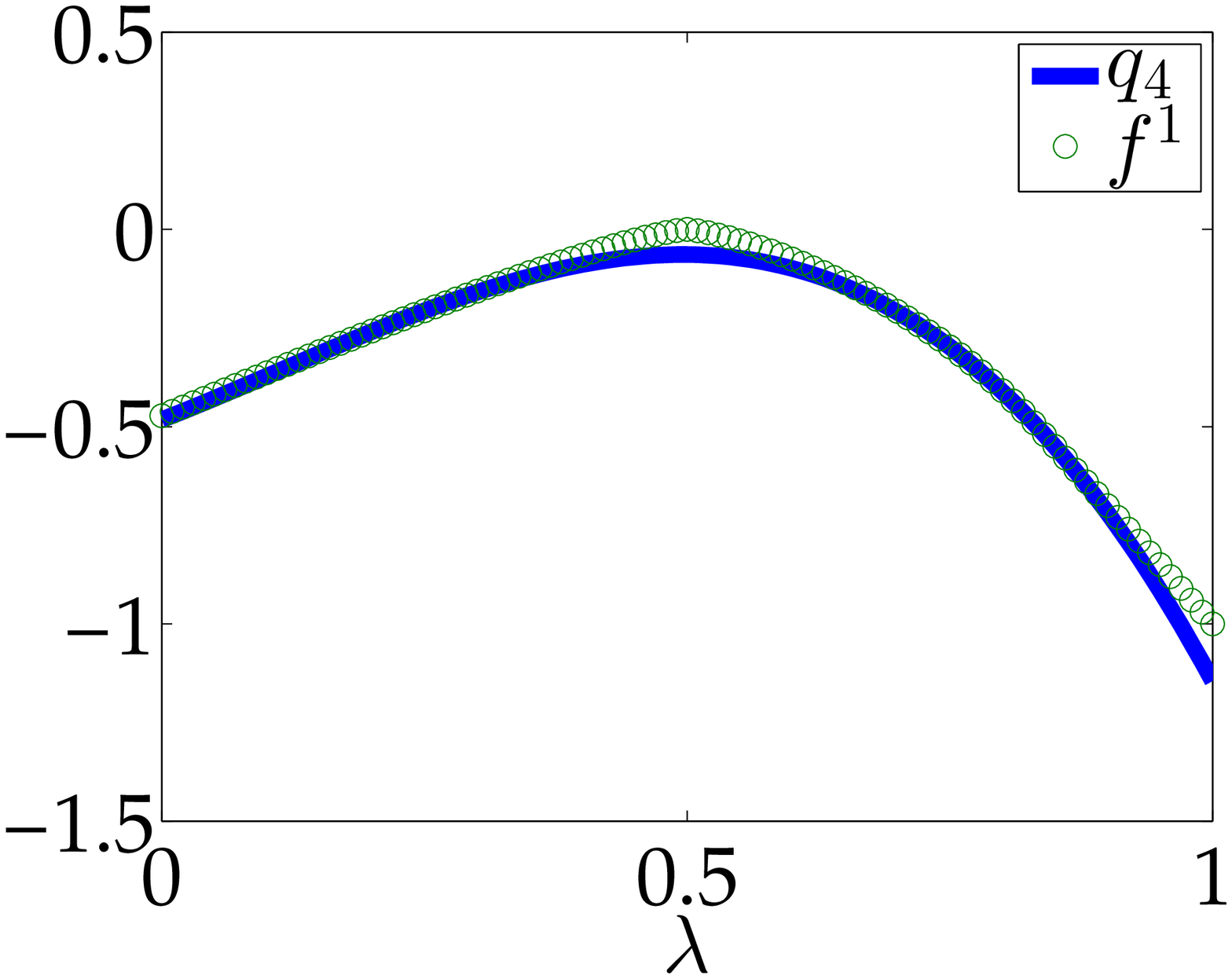}}
\subfigure[Degree 6 underestimator]{
\includegraphics[scale=\sizefig]{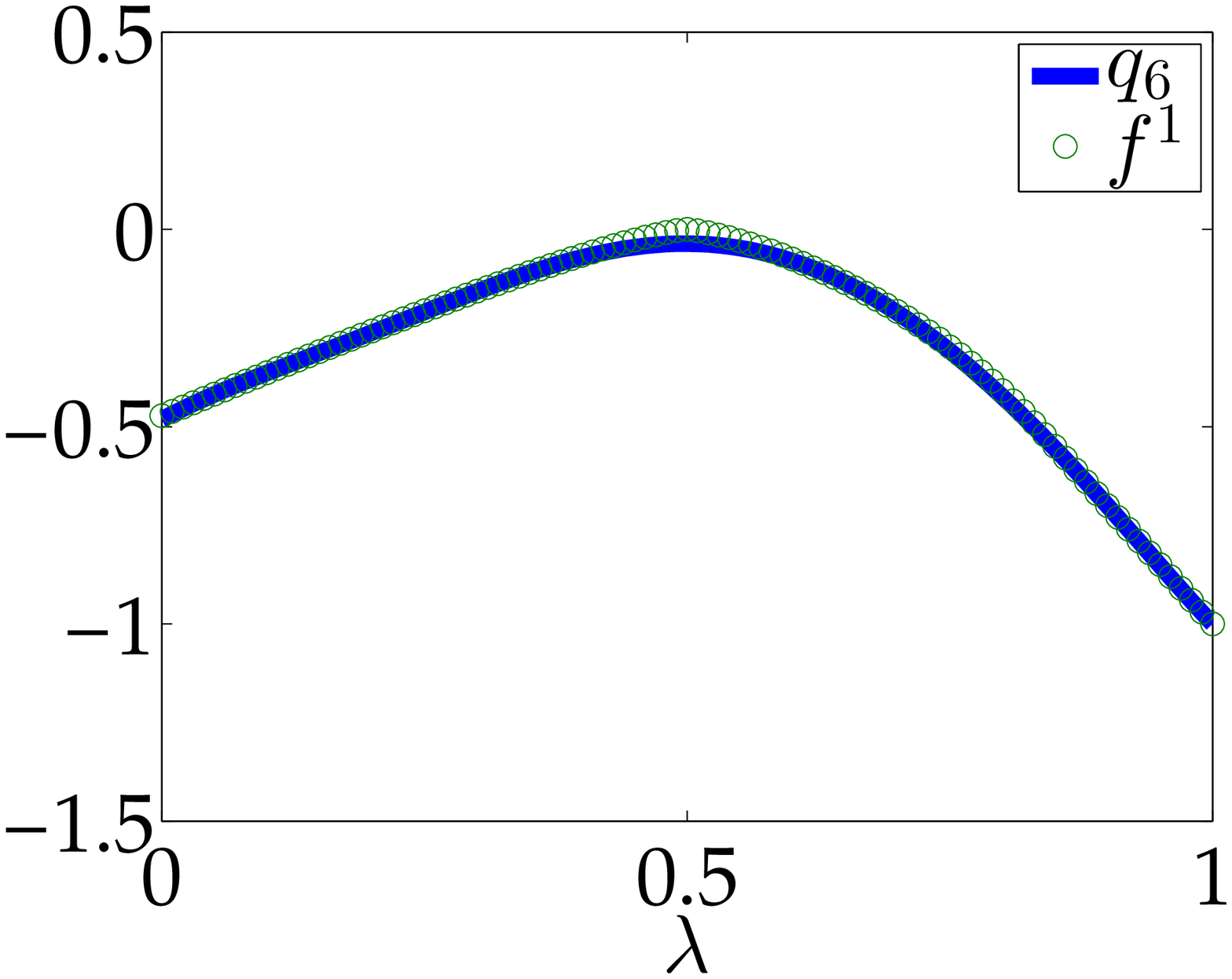}}
 \caption{A hierarchy of polynomial underestimators of the Pareto curve for Example~\ref{ex:ex11_4} obtained by weighted convex sum approximation
(method (a))}	\label{fig:undercvx}
\end{figure}

Then, for each $\lambda_i, i = 1, \dots, 100$, we compute an optimal solution $\x^*(\lambda_i)$ of Problem~$\P_{\lambda_i}^{1}$ and we set $f_{1 i}^* := f_1(\x^*(\lambda_i))$, $f_{2 i}^* := f_2(\x^*(\lambda_i))$. Hence, we obtain a discretization $(f_1^*, f_2^*)$ of the
 Pareto curve,  represented by the dotted curve on Figure~\ref{fig:approxcvx}. The required CPU running time for the corresponding SDP relaxations is $26$sec.

We compute an optimal solution of the primal SDP~\eqref{eq:primalsdp} at order $d = 5$, in order to provide a good approximation of $s  + 1$ moments with $s=4, 6, 8$. Then, we approximate each function $f_j^*$, $j = 1,2$ with a polynomial $h_{s j}$ of degree $s$ by solving the inverse problem from generalized moments (see Appendix~\ref{sec:inverse}). The resulting Pareto curve approximation using degree 4 estimators $h_{4 1}$ and $h_{4 2}$ is displayed on Figure~\ref{fig:approxcvx}~(a). For comparison purpose, higher degree approximations are also represented on Figure~\ref{fig:approxcvx}~(b) (degree 6 polynomials) and Figure~\ref{fig:approxcvx}~(c) (degree 8 polynomials). It consumes only $0.4$sec to compute the two degree 4 polynomials $h_{4 1}$ and $h_{4 2}$, $0.5$sec for the degree 6 polynomials and $1.4$sec for the degree 8 polynomials.

\begin{figure}[!ht]
\centering
\subfigure[Degree 4 estimators]{
\includegraphics[scale=\sizefig]{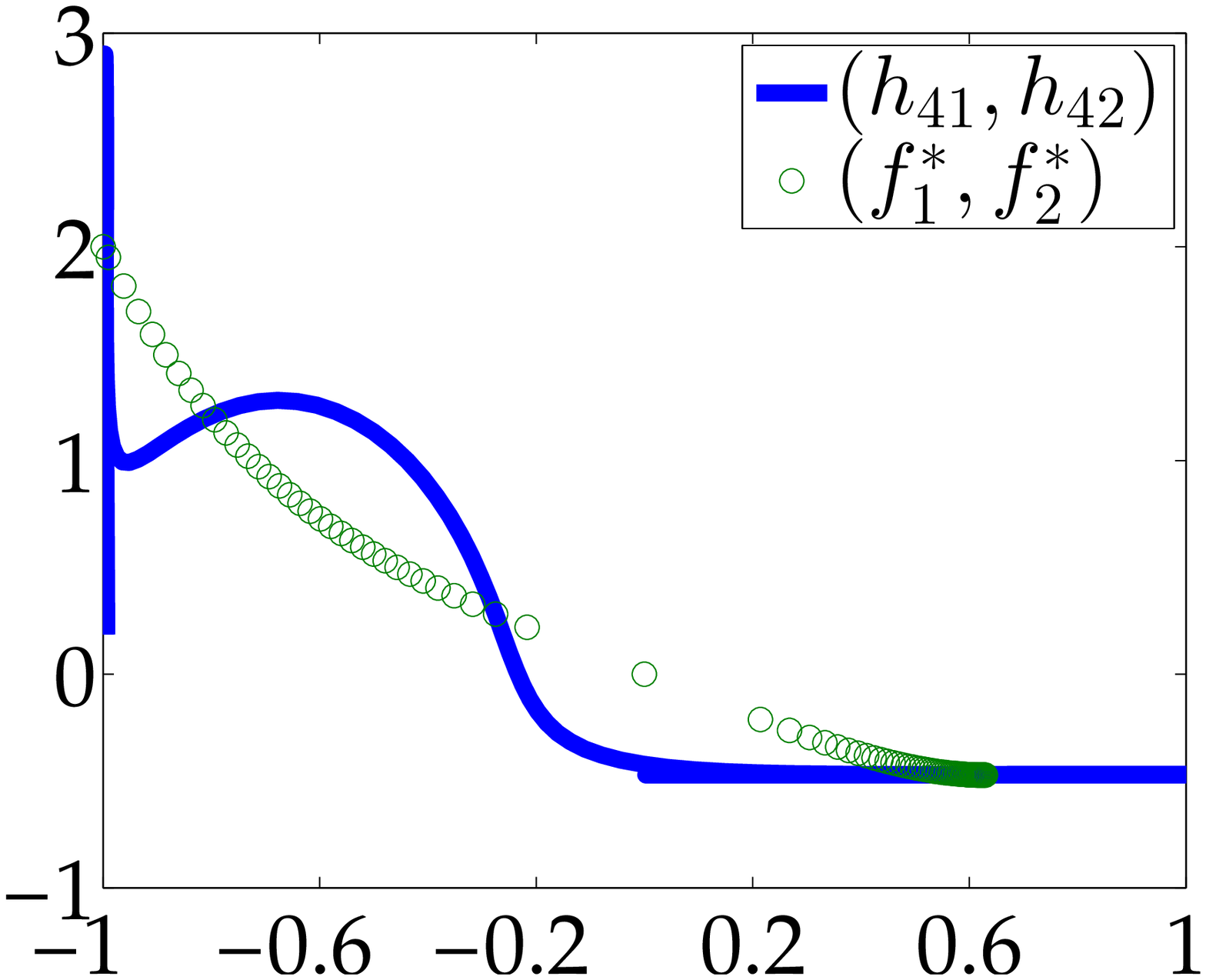}}
\subfigure[Degree 6 estimators]{
\includegraphics[scale=\sizefig]{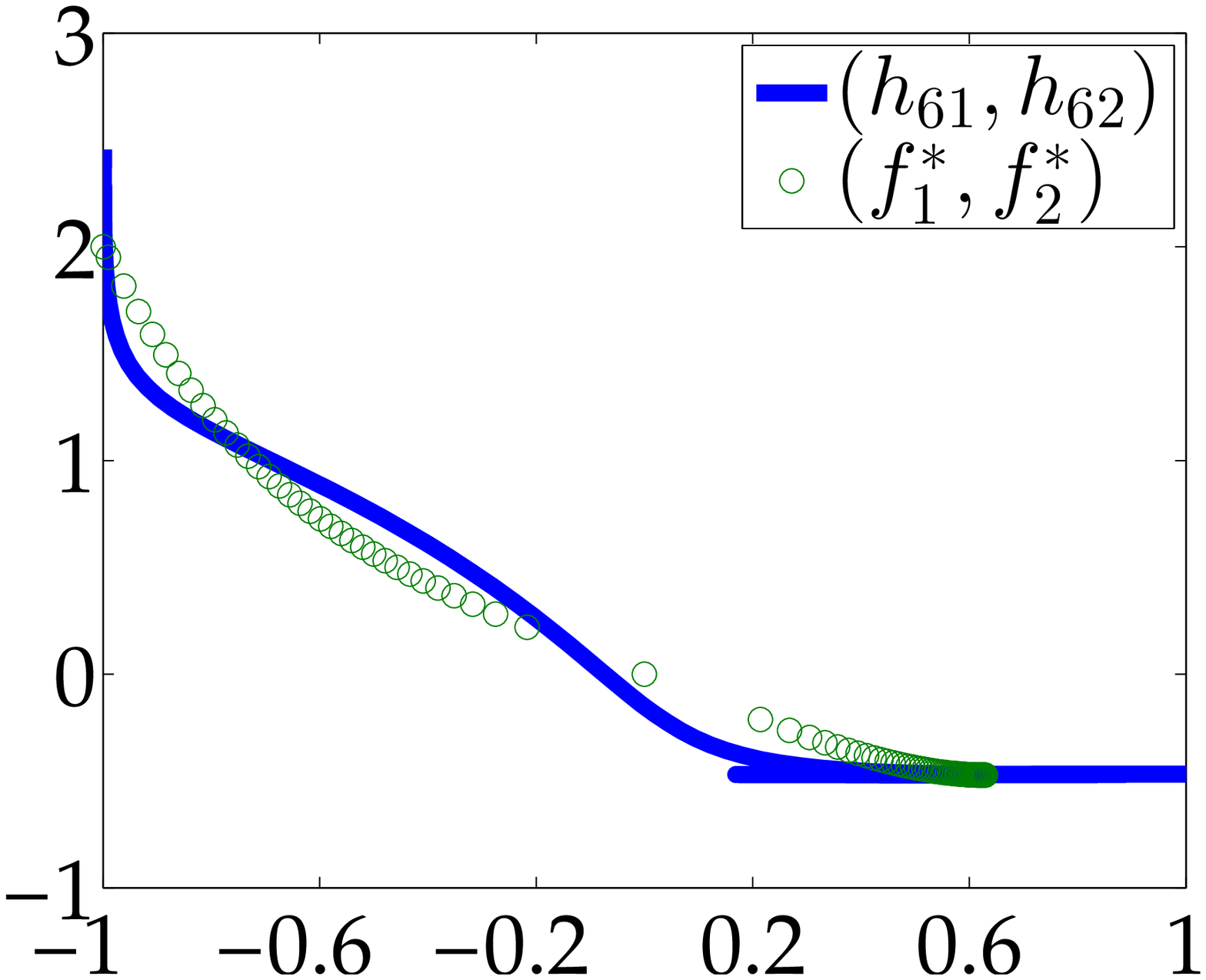}}
\subfigure[Degree 8 estimators]{
\includegraphics[scale=\sizefig]{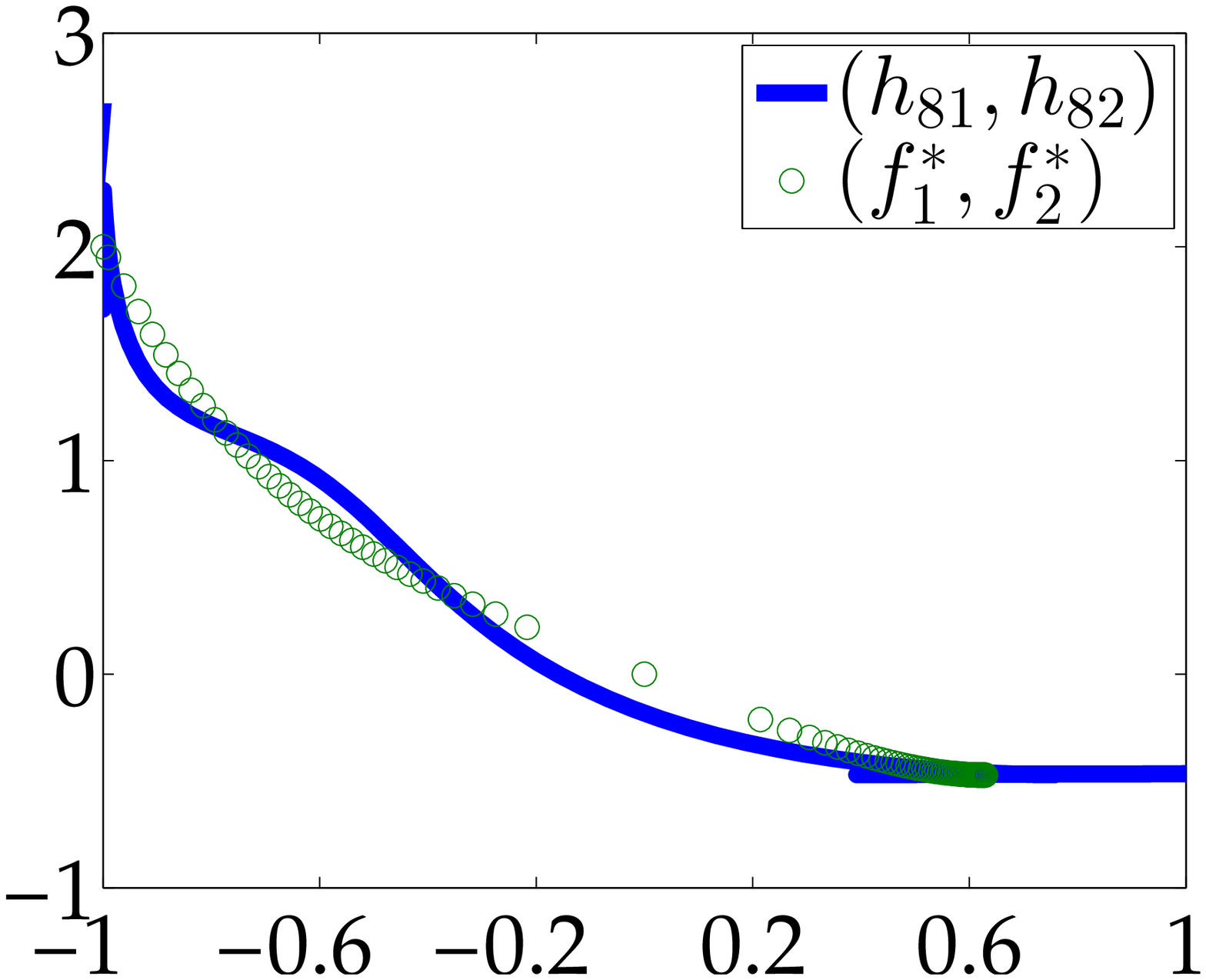}}
 \caption{A hierarchy of polynomial approximations of the Pareto curve for Example~\ref{ex:ex11_4} obtained by the weighted convex sum approximation (method (a))}	\label{fig:approxcvx}
\end{figure}
\end{example}

\subsection{Case 2: \texorpdfstring{$f(\S) + \R_+^2$}{} is not convex}
We have also solved the following two-dimensional nonlinear problem proposed in~\cite{Wilson2001}:
\begin{example}
\label{ex:test4}
Let
\begin{align*}
& g_1 := -(x_1-2)^3/2 - x_2 + 2.5 \:, & & f_1 := \tfrac{(x_1+x_2-7.5)^2}{4} + (x_2-x_1+3)^2 \:,\\
& g_2 := -x_1 - x_2 + 8 (x_2-x_1+0.65)^2 + 3.85 \:,  & & f_2 := 0.4(x_1-1)^2 + 0.4 (x_2-4)^2 \:.\\ 
& \S := \{\x \in [0, 5] \times [0, 3] : g_1(\x) \geq 0, g_2(\x) \geq 0 \} \: . & &
\end{align*}
Figure~\ref{fig:SfSnoncvx} depicts the discretization of the feasible set $\S$ as well as the image set $f(\S)$ for this problem. Note that the Pareto curve is non-connected and non-convex.
\begin{figure}[!ht]
\centering
\subfigure[$\S$]{
\includegraphics[scale=\sizefig]{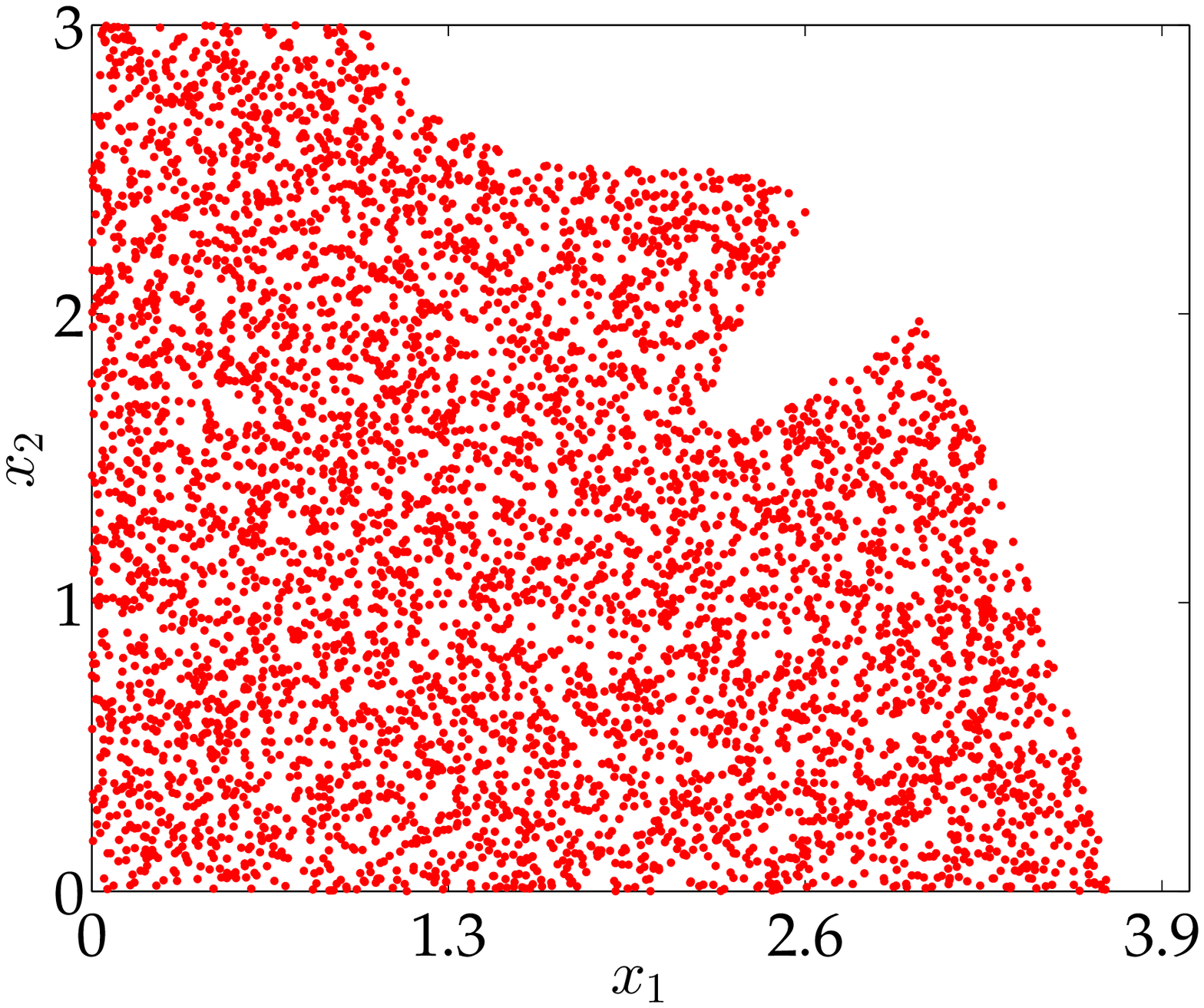}}
\hspace{1cm}
 \subfigure[$f(\S)$]{
\includegraphics[scale=\sizefig]{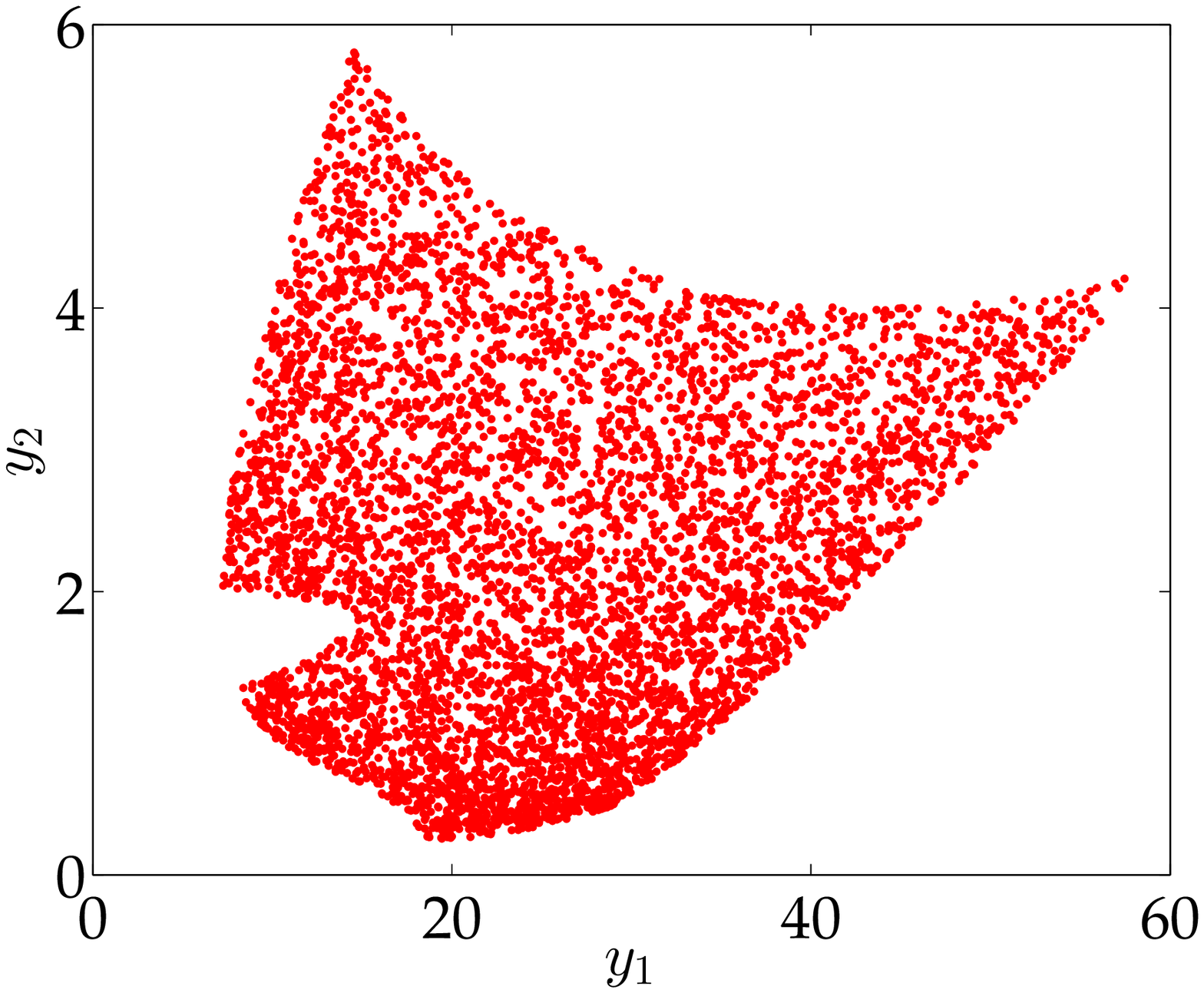}}
 \caption{Preimage and image set of $f$ for Example~\ref{ex:test4}}	\label{fig:SfSnoncvx}
\end{figure}
\end{example}
In this case, the weighted convex sum approximation of method (a) would not allow to properly reconstruct  the Pareto curve, due to the apparent nonconvex geometry of the set $f(\S) + \R_+^2$. Hence we have considered methods (b) and (c).
\paragraph*{Method (b): weighted Chebyshev approximation} 
As for Example~\ref{ex:ex11_4}, one solves the SDP~\eqref{eq:primalsdp} at order $d = 5$ and approximate each function $f_j^*$, $j = 1,2$ using polynomials of degree 4, 6 and 8. The approximation results are displayed on Figure~\ref{fig:approxnoncvx}. Degree 8 polynomials give a closer approximation of the Pareto curve than degree 4 or 6 polynomials. The solution time range is similar to the benchmarks of Example~\ref{ex:ex11_4}. The SDP running time for the discretization is about $3$min. 
The degree 4  polynomials are obtained after $1.3$sec, the degree 6 polynomials $h_{6 1}$, $h_{6 2}$ after $9.7$sec and the degree 8 polynomials after $1$min.
\begin{figure}[!ht]
\centering
\subfigure[Degree 4 estimators]{
\includegraphics[scale=\sizefig]{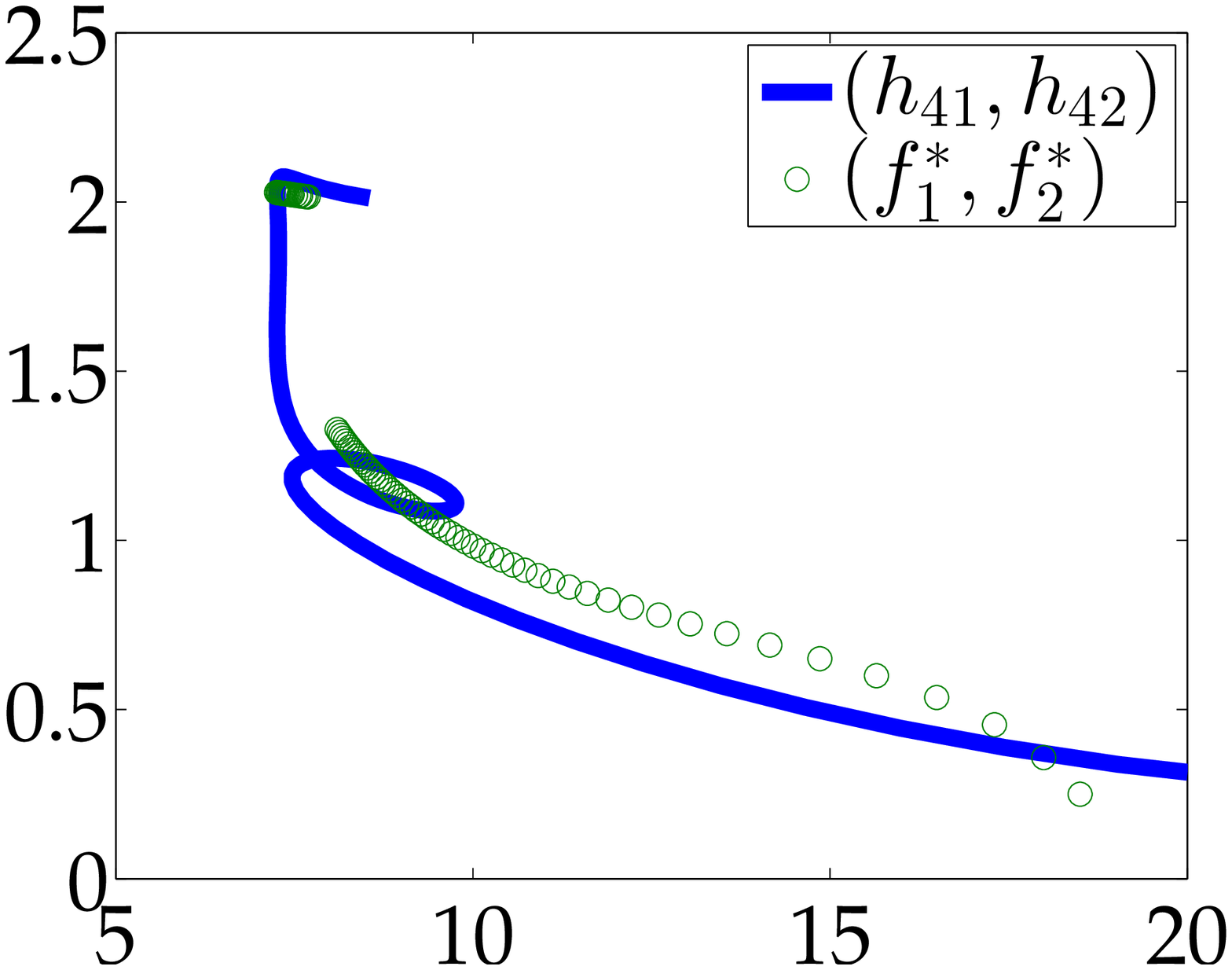}}
\subfigure[Degree 6 estimators]{
\includegraphics[scale=\sizefig]{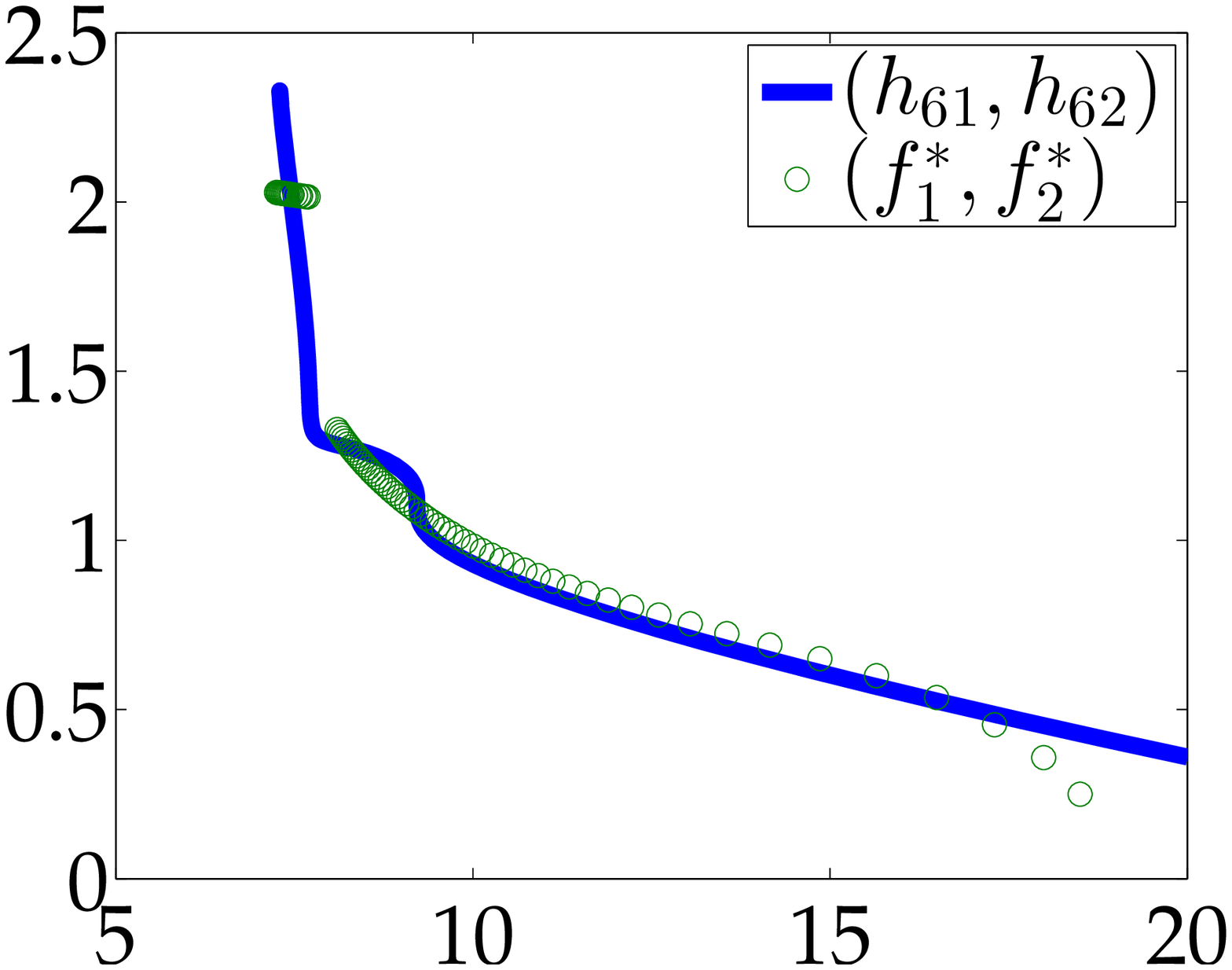}}
\subfigure[Degree 8 estimators]{
\includegraphics[scale=\sizefig]{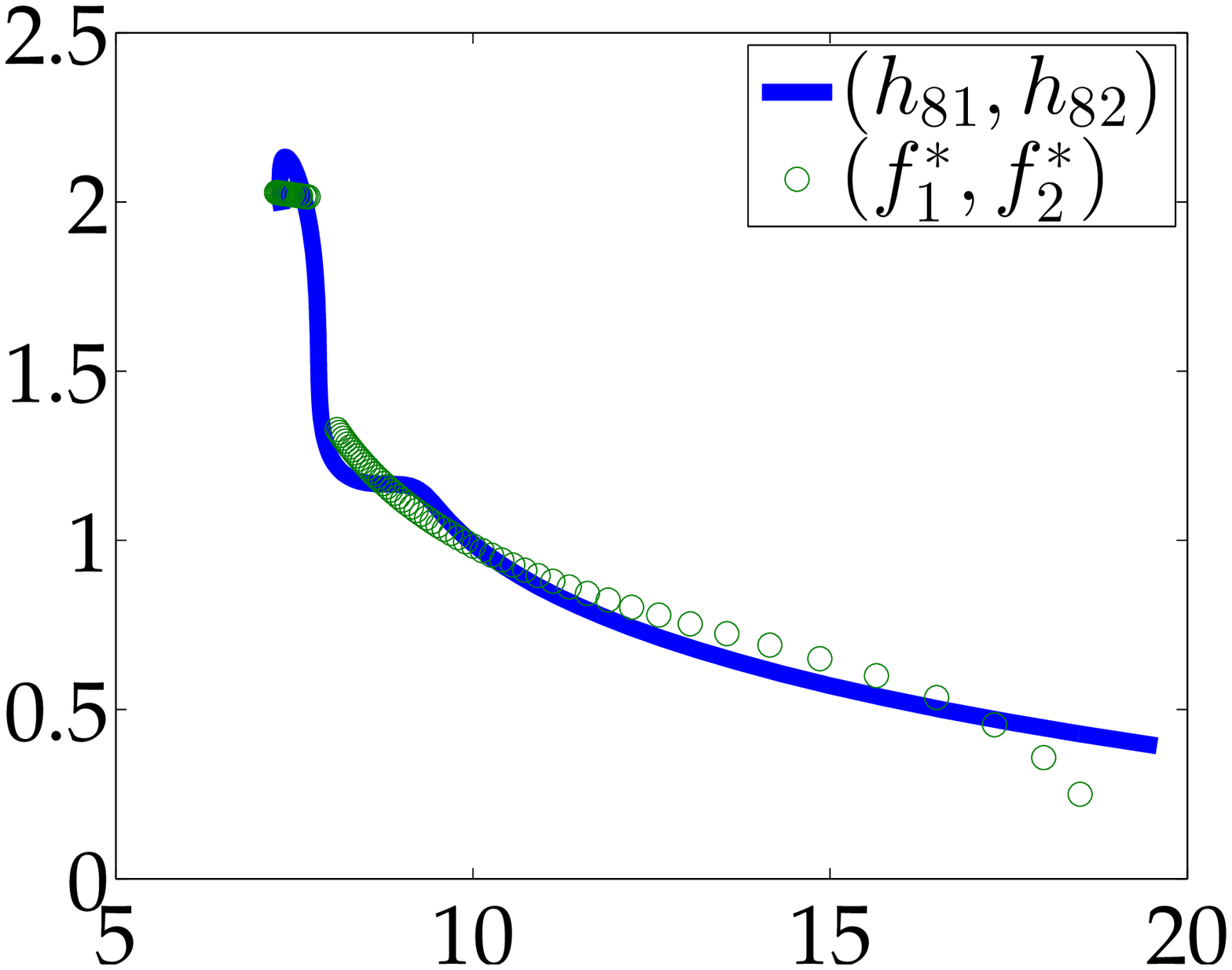}}
 \caption{A hierarchy of polynomial approximations of the Pareto curve for Example~\ref{ex:test4} obtained by the Chebyshev norm approximation (method (b))}	\label{fig:approxnoncvx}
\end{figure}

\paragraph*{Method (c): parametric sublevel set approximation}

Better approximations can be directly obtained by reformulating Example~\ref{ex:test4} as an instance of Problem~$\Plam^u$ and compute the degree $d$ optimal solutions $q_{2d}$ of the dual SDP~\eqref{eq:dualsdp}. Figure~\ref{fig:f2utest4} reveals that 
with  degree 4 polynomials one can already capture 
the change of sign of the Pareto front curvature (arising when the values of $f_1$ lie over $[10, 18]$). Observe also that higher-degree polynomials yield tighter underestimators of the left part of the Pareto front. The CPU time ranges from $0.5$sec to compute the degree 4 polynomial $q_4$, to $1$sec for the degree 6 computation and $1.7$sec  for the degree 8 computation. The discretization of the Pareto front is obtained by solving the polynomial optimization problems $\P^u_{\lambda_i}, i = 1, \dots, N$. The corresponding running time of SDP programs is $51$sec.
\begin{figure}[!ht]
\centering
\subfigure[Degree 4 estimators]{
\includegraphics[scale=\sizefig]{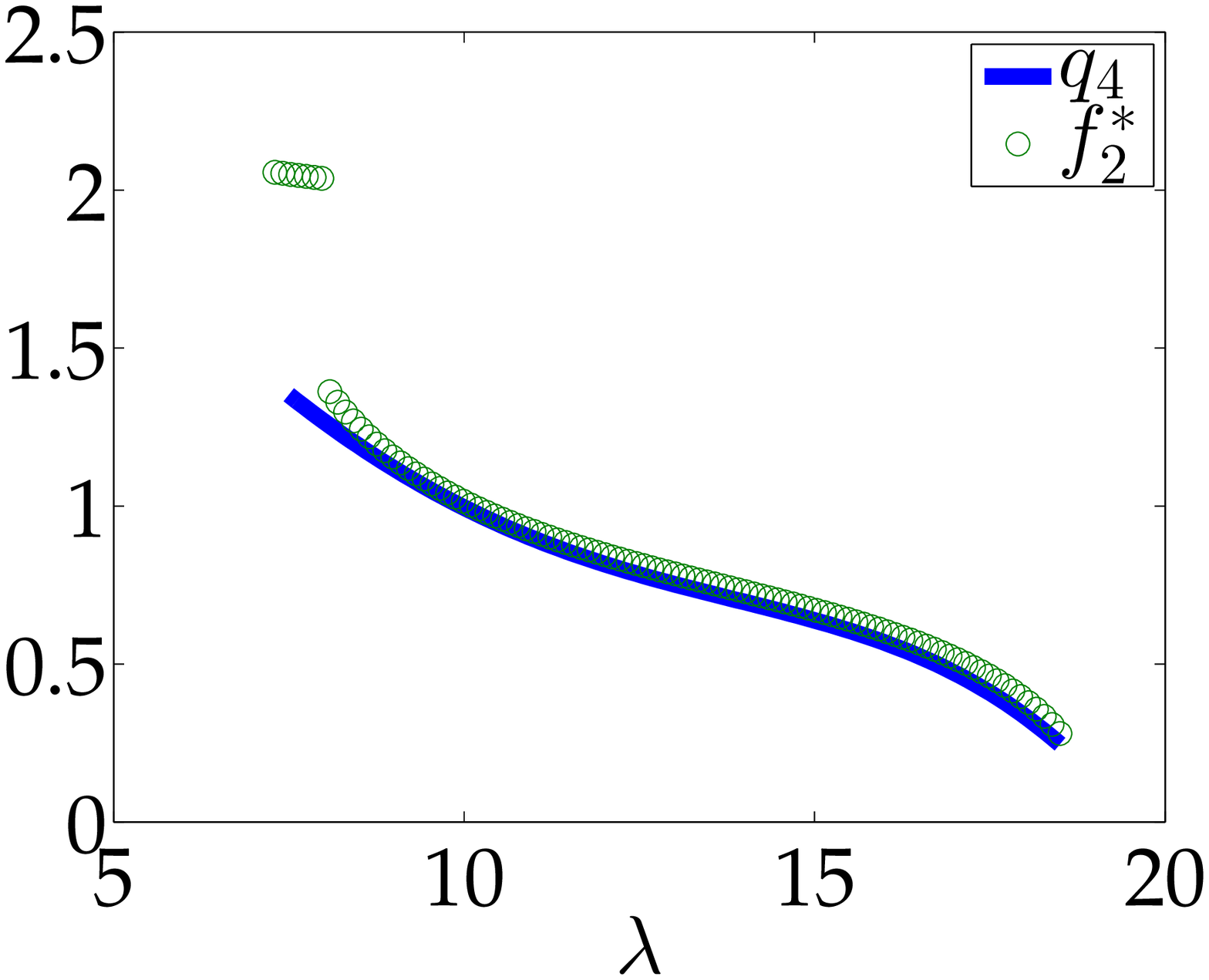}}
\subfigure[Degree 6 estimators]{
\includegraphics[scale=\sizefig]{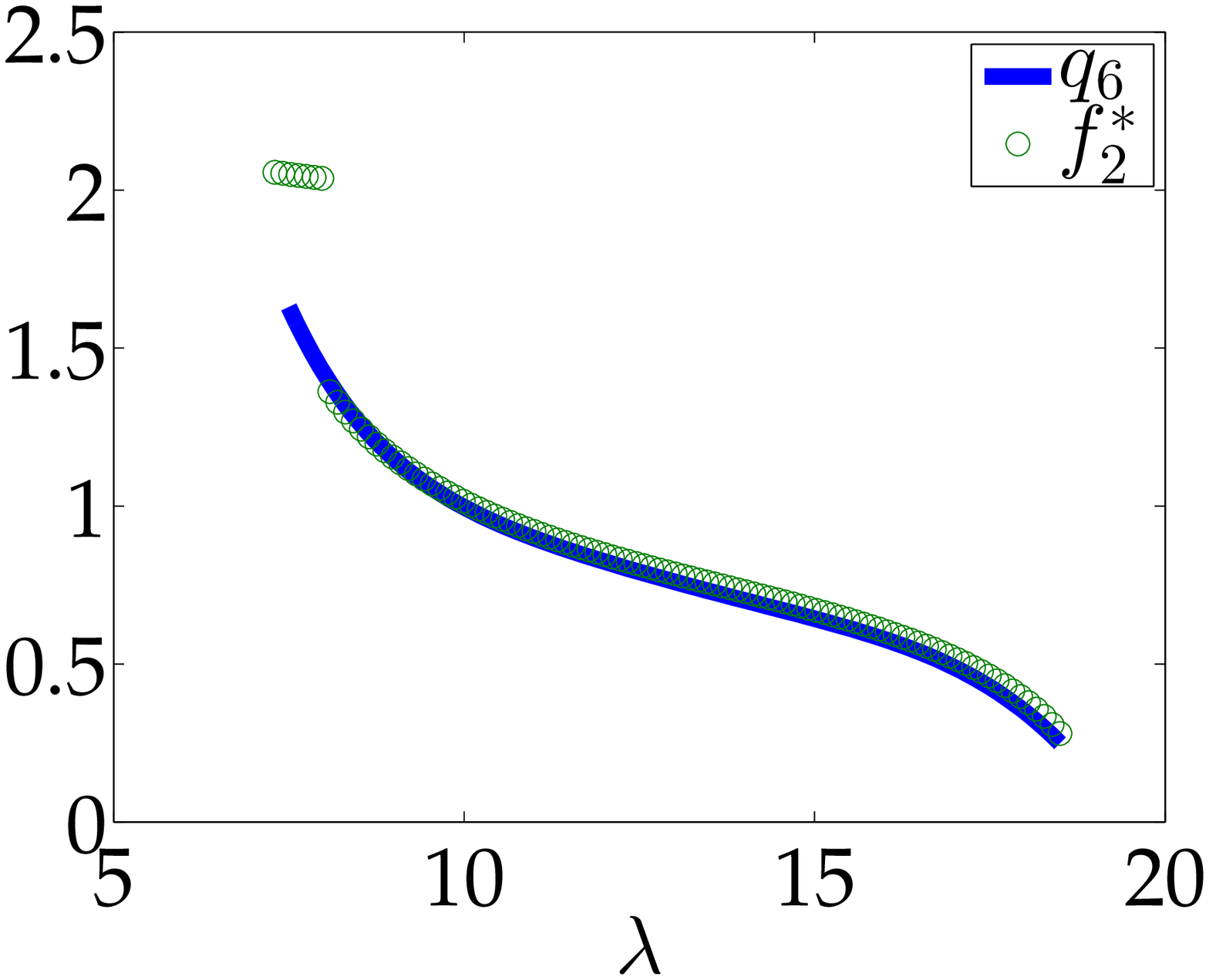}}
\subfigure[Degree  8  estimators]{
\includegraphics[scale=\sizefig]{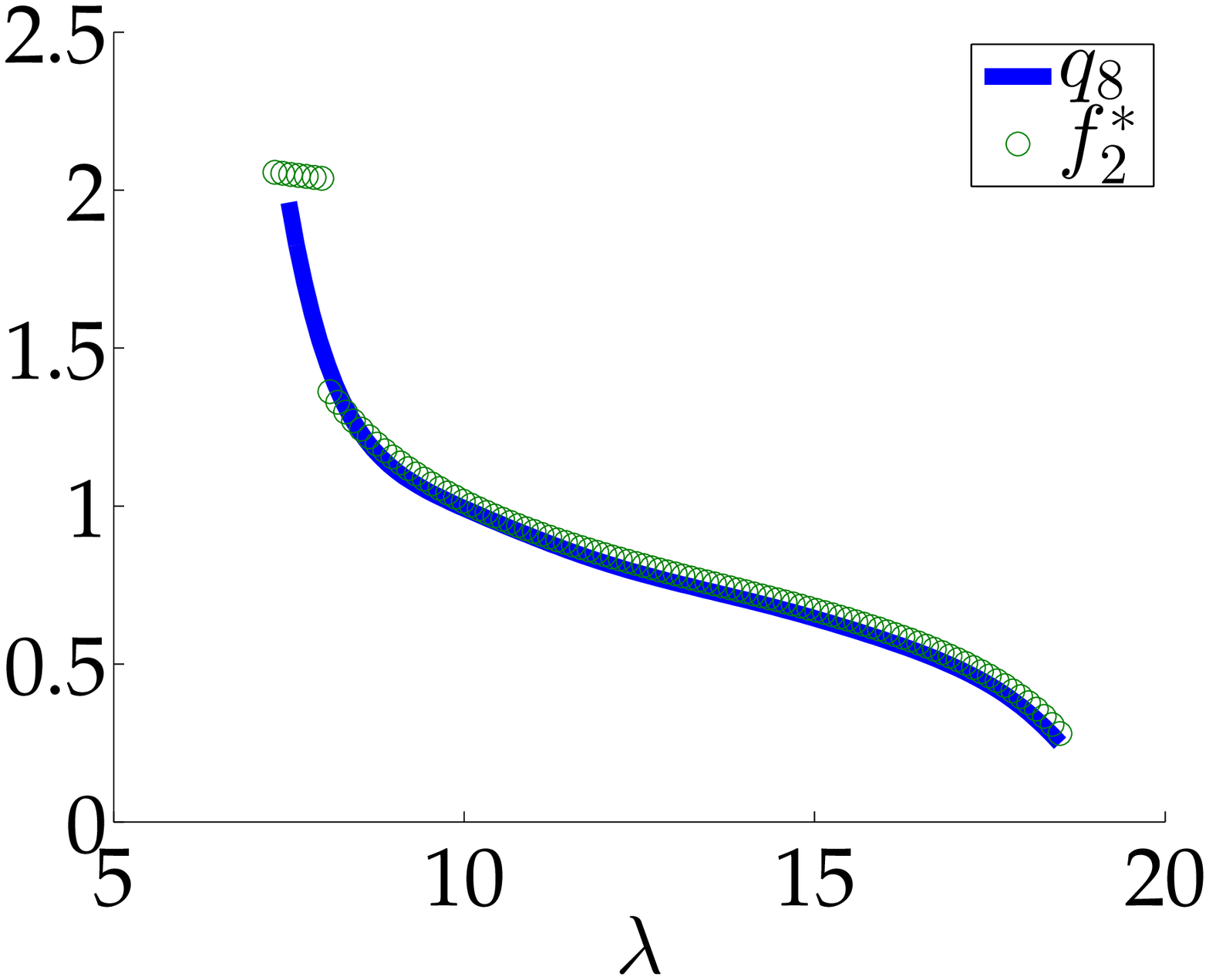}}
\caption{A hierarchy of polynomial underestimators of $\lambda \mapsto f_2^*(\lambda)$ for Example~\ref{ex:test4} obtained by the parametric sublevel set approximation (method (c))}	\label{fig:f2utest4}
\end{figure}

The same approach is used to solve the random bicriteria problem of Example~\ref{ex:rnd}.
\begin{example}
\label{ex:rnd}
Here, we generate two random symmetric real matrices $\Q_1, \Q_2 \in \R^{15 \times 15}$ as well as two random vectors $\q_1, \q_2 \in \R^{15}$. Then we solve the quadratic bicriteria problem $\min_{\x \in [-1, 1]^{15}} \{f_1(\x), f_2(\x) \}$, with $f_j (\x) := \x^\top \Q_j \x / n^2 - \q_j^\top \x / n$, for each $j=1,2$. 

Experimental results are displayed in Figure~\ref{fig:rnd}.
For a 15 variable random instance, it consumes $18$min of CPU time to compute $q_4$ against only $0.5$sec for $q_2$ but the degree 4 underestimator yields a better point-wise approximation of the Pareto curve. The running time of SDP programs is more than $8$ hours to compute the discretization of the front.
\begin{figure}[!ht]
\centering
\subfigure[Degree 2 underestimator]{
\includegraphics[scale=\sizefig]{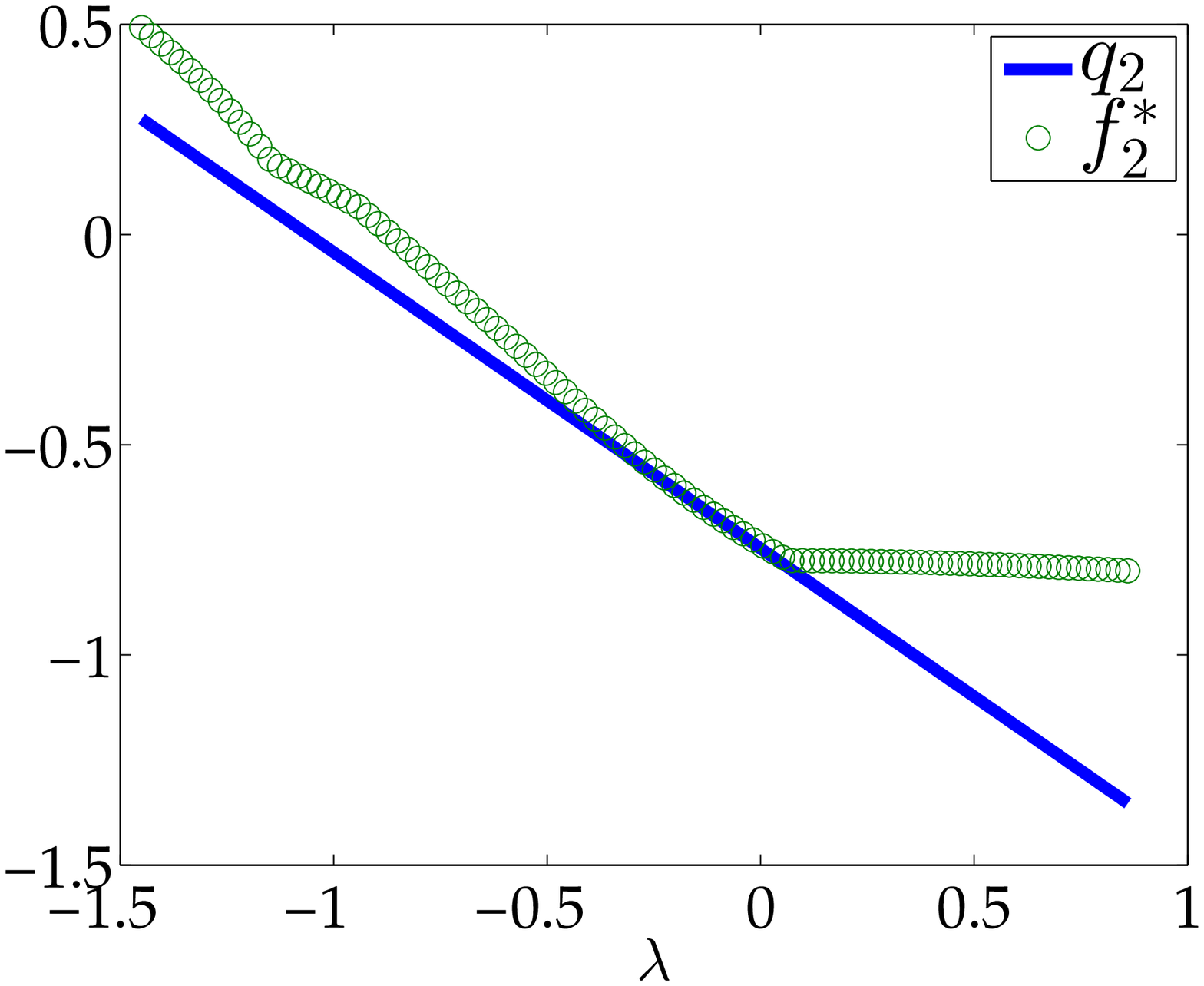}}
\subfigure[Degree 4 underestimator]{
\includegraphics[scale=\sizefig]{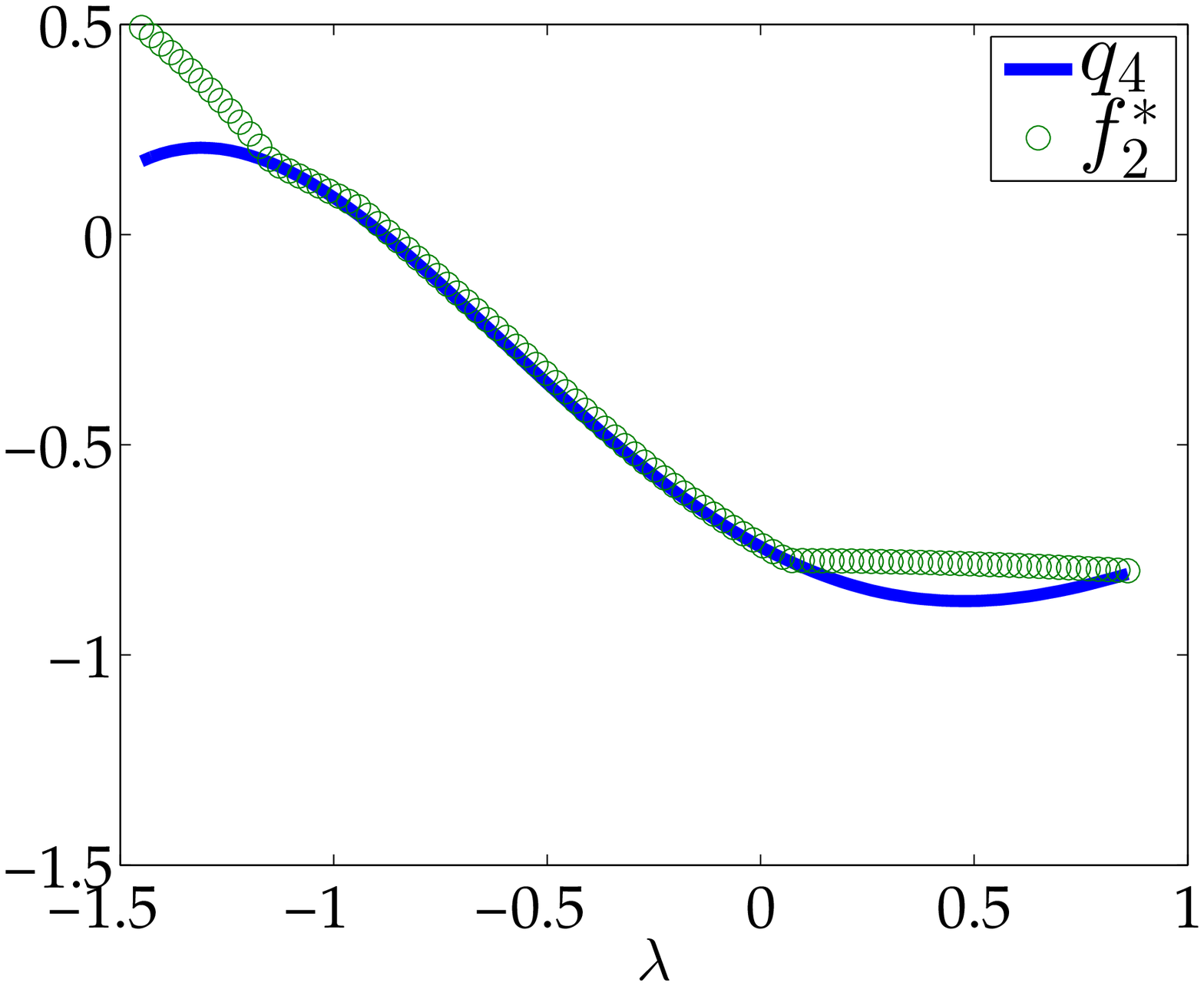}}
 \caption{ A hierarchy of polynomial underestimators of $\lambda \mapsto f_2^*(\lambda)$ for Example~\ref{ex:rnd} obtained by the parametric sublevel set approximation (method (c))}	\label{fig:rnd}
\end{figure}
\end{example}
\section{Conclusion}

The present framework can tackle multicriteria polynomial problems by solving semidefinite relaxations of parametric optimization programs. 
The reformulations based on the weighted sum approach and the Chebyshev approximation allow to recover the Pareto curve, defined here as the set of weakly Edgeworth-Pareto points, by solving an inverse problem from generalized moments. An alternative method builds directly a hierarchy of polynomial underestimators of the Pareto curve.
The numerical experiments illustrate the fact that the Pareto curve can be estimated as closely as desired using semidefinite programming within a reasonable amount of time for problem still of modest size. 
Finally our approach could be extended to higher-dimensional problems by exploiting the system properties such as sparsity patterns or symmetries.
\section*{Acknowledgments}

This work was partly funded by an award of the Simone and Cino del Duca foundation of Institut de France.


                                  
\appendix                                  
\section{Appendix. An Inverse Problem from Generalized Moments}   
\label{sec:inverse}
      
Suppose that one wishes to approximate each function $f^*_j$, $j=1,2$, with a polynomial of degree $s$. One way to do this is to search for $h_j\in\R_{s}[\lambda]$, $j=1,2$, optimal solution of
\begin{equation}
\label{eq:L2}
\min_{h \in\R_{s}[\lambda] } \: \displaystyle\int_0^1(h(\lambda)-f^*_j(\lambda))^2d\lambda\:, \quad j=1,2 \:.
\end{equation}
Let $\H_s \in \R^{(s + 1) \times (s + 1)}$ be the Hankel matrix associated with the moments of the Lebesgue measure on $[0,1]$, i.e.~$\H_s(i,j)=1/(i+j+1)$, $i,j=0,\ldots,s$. 

\begin{theorem}
\label{th:L2}
For each $j=1,2$, let $\m_j^s=(m^k_j)\in\R^{s+1}$ be as in~\eqref{eq:lem2-2}. Then~\eqref{eq:L2} has an optimal solution
$h_{s, j} \in \R_{s}[\lambda]$ whose vector of coefficient $\h_{s, j} \in \R^{s+1}$ is given by:
\begin{equation}
\label{eq:psj}
\h_{s, j} = \H_s^{-1} \m_j^s, \quad j=1,2 \:.
\end{equation}
\end{theorem}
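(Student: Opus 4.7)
The plan is to expand the $L^2$ objective as a quadratic form in the coefficient vector of $h$ and then apply first-order optimality for a strictly convex quadratic. Write $h(\lambda)=\sum_{k=0}^{s}h_k\lambda^k$ and denote $\h=(h_0,\dots,h_s)^{\top}\in\R^{s+1}$. Expanding the square in the objective of~\eqref{eq:L2} yields
\[
\int_0^1\bigl(h(\lambda)-f_j^*(\lambda)\bigr)^2 d\lambda
= \int_0^1 h(\lambda)^2 d\lambda \;-\; 2\int_0^1 h(\lambda)\,f_j^*(\lambda)\,d\lambda \;+\; \int_0^1 f_j^*(\lambda)^2 d\lambda .
\]
The third term is a constant independent of $\h$, so it plays no role in the minimization.

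Next I would rewrite the first two terms in matrix--vector form. For the quadratic term, a direct computation gives
\[
\int_0^1 h(\lambda)^2 d\lambda = \sum_{i,k=0}^{s} h_i h_k \int_0^1 \lambda^{i+k} d\lambda = \sum_{i,k=0}^{s}\frac{h_i h_k}{i+k+1} = \h^{\top}\H_s\h,
\]
by the very definition of $\H_s$. For the linear term, using the definition of $\m_j^s$ in~\eqref{eq:lem2-2},
\[
\int_0^1 h(\lambda)\,f_j^*(\lambda)\,d\lambda = \sum_{k=0}^{s} h_k \int_0^1 \lambda^k f_j^*(\lambda)\,d\lambda = \h^{\top}\m_j^s.
\]
Hence, up to an additive constant, the objective equals $\h^{\top}\H_s\h - 2\,\h^{\top}\m_j^s$.

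The key step is to observe that $\H_s$ is the Gram matrix of the monomials $1,\lambda,\dots,\lambda^s$ with respect to the standard Lebesgue inner product on $[0,1]$; since these monomials are linearly independent in $L^2([0,1])$, $\H_s$ is symmetric positive definite, hence invertible. Consequently the function $\h\mapsto \h^{\top}\H_s\h - 2\,\h^{\top}\m_j^s$ is strictly convex and coercive, so it has a unique minimizer characterized by the first-order condition $2\H_s\h - 2\m_j^s = 0$, giving $\h_{s,j}=\H_s^{-1}\m_j^s$ as claimed in~\eqref{eq:psj}.

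The only non-routine ingredient is the positive definiteness of the Hankel moment matrix $\H_s$, and this is immediate from its interpretation as a Gram matrix; everything else is a bookkeeping exercise with integrals. I do not anticipate any real obstacle, but would take care to state explicitly that $\m_j^s$ is well defined (finite) since $f_j^*$ is bounded on $[0,1]$ by compactness of $\S$ and continuity of $f_j$, so the integrals defining $m_j^k$ converge.
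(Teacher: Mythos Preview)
Your proof is correct and follows essentially the same route as the paper: expand the $L^2$ objective into $\h^{\top}\H_s\h-2\h^{\top}\m_j^s$ plus a constant, then read off the minimizer. You actually add a small improvement by justifying the invertibility of $\H_s$ via its interpretation as the Gram matrix of linearly independent monomials, whereas the paper simply asserts the conclusion follows from the reduced quadratic.
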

\begin{proof}
Write
\[\int_0^1(h(\lambda) - f^*_j(\lambda))^2d\lambda = \underbrace{\int_0^1h^2d\lambda}_{A}-2\underbrace{\int_0^1h(\lambda)f^*_j(\lambda)d\lambda}_{B}+\underbrace{\int_0^1(f^*_j(\lambda))^2d\lambda}_{C} \: ,\]
and observe that
\[A=\h^\top \H_s \h \:, \quad B= \sum_{k=0}^{s} h_k \int_0^1 \lambda^k f_j^*(\lambda) \, d\lambda =
\sum_{k=0}^{s} h_k \, m^k_j =
\h^\top \m_j \:, \]
and so, as $C$ is a constant,~\eqref{eq:L2} reduces to 
\[ \min_{\h \in \R^{s+1}} \h^\top \H_s \h - 2 \h^\top \m_j \: , \quad j=1,2 \:, \]
from which~\eqref{eq:psj} follows.
\end{proof}
\bibliographystyle{plain}

\end{document}